\documentclass[11pt,reqno]{amsart}
\usepackage{amssymb,latexsym}
\usepackage{graphicx}
\usepackage{color} 
\usepackage{url}		%does nice formatting of URLs
\calclayout

\theoremstyle{plain}
\numberwithin{equation}{section}
\newtheorem{thm}{Theorem}[section]

\newtheorem{lemma}[thm]{Lemma}

\newtheorem{proposition}[thm]{Proposition}
\newtheorem{corollary}[thm]{Corollary}
\newtheorem{remark}[thm]{Remark}

\begin{document}
%% replace the values in the next three lines by the correct information
\setcounter{page}{1}

\title[A combinatorial sum with two complex parameters]
{A combinatorial sum with two complex parameters}
\author{Michel Bataille}
\address{Independent Researcher, 76520 Franqueville-Saint-Pierre, France}
\email{michelbataille@wanadoo.fr}
\medskip
\author{Robert Frontczak}
\address{Independent Researcher, 72764 Reutlingen, Germany}
\email{robert.frontczak@web.de}

\begin{abstract}
This article deals with combinatorial identities with two complex parameters. Starting with a fundamental lemma, we derive various polynomial identities, combinatorial sums and related results. For example, we generalize a polynomial identity of Carlitz involving central binomial coefficients and present a second identity of the same nature. Special cases of our findings lead to sums involving Catalan numbers, harmonic numbers, and Fibonacci numbers. 
\bigskip
\newline
{\sc Key words}: Sum, (central) binomial coefficient, polynomial identity, Catalan number, harmonic number, Fibonacci number. 
\\
{\sc MSC 2000}: 05A10, 11B65, 11B83. 
\\
\end{abstract}

%\thanks{ }

\maketitle

\section{Introduction}

Identities that express relations between polynomials and combinatorial quantities such as binomial coefficients, falling or rising factorials or other counting numbers are important objects in number theory, combinatorics and probability. Most basic examples are the binomial or multinomial theorem. Identities with a complex parameter in the binomial coefficient are also widely spread in the mathematical literature. 
What comes to mind first is probably the Chu-Vandermonde identity: 
\begin{equation*}
\sum_{k=0}^n \binom{x}{k} \binom{y}{m-k} = \binom{x+y}{n} \binom{y-x}{m-n}
\end{equation*}
where $x$ and $y$ are complex numbers and $m$ and $n$ are nonnegative integers.\\ 

The classical textbooks by Gould \cite{Gould}, Comtet \cite{Comtet}, Riordan \cite{Riordan}, 
and Graham et al. \cite{Graham} contain many additional examples. Such identities still attract attention and are 
the subject of research \cite{Adegoke26,Adegoke,Alzer,Munarini,Boyadzhiev10}. 
For instance, Boyadzhiev \cite[Proposition 2]{Boyadzhiev10} gives the polynomial identity, valid for complex numbers $x$ and $\alpha$,
\begin{equation*}
\sum_{k=0}^n \binom{n}{k} \binom{\alpha+k}{k} x^k = \sum_{k=0}^n \binom{\alpha}{n-k} \binom{\alpha+k}{k} (-1)^{n-k} (1+x)^k,
\end{equation*}
and shows that this identity contains the polynomial identities of Simons and Ljunggren as special cases. \\

In his book \cite{Gould}, Henry Gould lists identities valid for a polynomial $f(x)$ (identities numbered Z1 to Z12). 
Let $f(x)$ be any polynomial in $x$ of degree $\leq n$. Then identity Z5 is
\begin{equation}\label{Gould_id}
f(x+y) = y \binom{y+n}{n} \sum_{k=0}^n \binom{n}{k} (-1)^k \frac{f(x-k)}{y+k}.
\end{equation}
If we set $y=x$ and $f(x)=1$ then
we obtain the well-known decomposition
\begin{equation}\label{partial}
\sum_{k=0}^n \binom{n}{k} \frac{(-1)^k}{x+k} = \frac{1}{x\binom{x+n}{n}} \qquad x\notin \{-n,-(n-1),\ldots,0\}.
\end{equation}
In this paper, we work with an identity similar in type to \eqref{Gould_id}. It involves two complex parameters and appears very productive. This is first shown by several applications leading to a number of identities and evaluations of combinatorial sums (Catalan's numbers intervene in some of them). Then the focus turns to interesting results about harmonic numbers. Finally, a special family of combinatorial sums is considered and studied in details. To conclude the paper, a few examples of applications to Fibonacci numbers are offered.\\[0.3cm]
For the convenience of the reader, we recall the following definition: 
The harmonic numbers $H_z$ are defined by $H_0=0$ and for $0\ne z\in\mathbb C\setminus\mathbb Z^{-}$ by the relation
\begin{equation*}
H_z = H_{z - 1} + \frac{1}{z}
\end{equation*}
It is worth noticing that
\begin{equation}\label{Har_psi}
H_z = \psi(z + 1) + \gamma,\qquad (z\notin \mathbb{Z}^-)
\end{equation}
where $\gamma$ is the Euler-Mascheroni constant and $\psi(z)=\Gamma'(z)/\Gamma(z)$ is the digamma (or psi) function. 
Of course, when $z=n$, a positive integer, then $H_n=\sum_{k=1}^n \frac{1}{k}$.

\section{A fundamental lemma and first consequences}

The results of this study are derived from the following fundamental lemma.

\begin{lemma}
Let $n$ be a nonnegative integer and let $x$ and $z$ be two complex numbers such that $z\notin\{0,1,\ldots,n\}$. Then we have
\begin{equation}\label{main_lem_id}
\sum_{k=0}^n \binom{z}{k} (-1)^k (x+1)^k = (-1)^n (n+1)\binom{z}{n+1} \sum_{k=0}^n \binom{n}{k} \frac{x^k}{z-k}.
\end{equation}
\end{lemma}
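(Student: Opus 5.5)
We prove \eqref{main_lem_id} by expanding both sides as polynomials in $x$ and comparing coefficients.

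\emph{Step 1 (left-hand side).} Writing $(x+1)^k=\sum_j\binom{k}{j}x^j$ and interchanging the sums, the coefficient of $x^j$ on the left is
\begin{equation*}
\sum_{k=j}^n \binom{z}{k}\binom{k}{j}(-1)^k .
\end{equation*}
Using $\binom{z}{k}\binom{k}{j}=\binom{z}{j}\binom{z-j}{k-j}$, this becomes
\begin{equation*}
(-1)^j\binom{z}{j}\sum_{i=0}^{n-j}\binom{z-j}{i}(-1)^i .
\end{equation*}
The partial alternating sum $\sum_{i=0}^m\binom{w}{i}(-1)^i=(-1)^m\binom{w-1}{m}$ holds for complex $w$; it follows by induction from Pascal's rule. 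So the coefficient of $x^j$ on the left equals
\begin{equation*}
(-1)^n\binom{z}{j}\binom{z-j-1}{n-j}.
\end{equation*}

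\emph{Step 2 (right-hand side).} The coefficient of $x^j$ on the right is
\begin{equation*}
(-1)^n(n+1)\binom{z}{n+1}\binom{n}{j}\frac{1}{z-j}.
\end{equation*}
This is well defined because $z\neq j$ for every $0\le j\le n$.

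\emph{Step 3 (comparison).} It remains to check, for each $0\le j\le n$,
\begin{equation*}
\binom{z}{j}\binom{z-j-1}{n-j}(z-j)=(n+1)\binom{z}{n+1}\binom{n}{j}.
\end{equation*}
Write everything with falling factorials. The left side is
\begin{equation*}
\frac{z^{\underline{j}}}{j!}\cdot\frac{(z-j-1)^{\underline{n-j}}}{(n-j)!}\,(z-j)=\frac{z^{\underline{n+1}}}{j!\,(n-j)!},
\end{equation*}
since $z^{\underline{j}}\,(z-j)\,(z-j-1)^{\underline{n-j}}=z^{\underline{n+1}}$. The right side is
\begin{equation*}
(n+1)\frac{z^{\underline{n+1}}}{(n+1)!}\cdot\frac{n!}{j!\,(n-j)!}=\frac{z^{\underline{n+1}}}{j!\,(n-j)!}.
\end{equation*}
The two sides agree, which proves the identity.

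The only step that needs care is the partial alternating sum of binomial coefficients with complex upper index in Step 1. A short induction on $m$ using $\binom{w}{m}=\binom{w-1}{m}+\binom{w-1}{m-1}$ settles it. Everything else is routine coefficient manipulation.
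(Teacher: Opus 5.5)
Your proof is correct and follows the paper's route step for step. Both expand $(x+1)^k$, apply $\binom{z}{k}\binom{k}{j}=\binom{z}{j}\binom{z-j}{k-j}$ and the partial alternating sum $\sum_{i=0}^m(-1)^i\binom{w}{i}=(-1)^m\binom{w-1}{m}$, and finish with the falling-factorial identity $\binom{z}{j}\binom{z-j-1}{n-j}(z-j)=(n+1)\binom{z}{n+1}\binom{n}{j}$; the only extra in yours is the Pascal-rule induction for the alternating sum, which the paper simply quotes as known.
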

\begin{proof}
We use the following known relations where $z$ is a complex number 
\[\binom{z}{s}\binom{s}{t}=\binom{z}{t}\binom{z-t}{s-t}\quad (s,t \ \mbox{integers and } \ s\ge t\ge 0)\]
and
\[\sum_{k=0}^p(-1)^k\binom{z}{k}=(-1)^p\binom{z-1}{p}\quad (p\ \mbox{integer and }\ p\ge 0).\]
We calculate
\begin{align*}
\sum_{k=0}^n \binom{z}{k} (-1)^k (x+1)^k &= \sum_{k=0}^n \binom{z}{k} (-1)^k \sum_{j=0}^k \binom{k}{j} x^j \\
&= \sum_{j=0}^n \sum_{k=j}^n \binom{z}{j} \binom{z-j}{k-j} (-1)^k x^j \\
&= \sum_{j=0}^n \binom{z}{j} x^j \sum_{r=0}^{n-j} \binom{z-j}{r} (-1)^{r+j} \\
&= \sum_{j=0}^n \binom{z}{j} (-1)^j x^j (-1)^{n-j} \binom{z-j-1}{n-j} \\
&= (-1)^n \sum_{j=0}^n \binom{z}{j} \binom{z-j-1}{n-j} x^j
\end{align*}
and \eqref{main_lem_id} follows because
\[\binom{z}{j} \binom{z-j-1}{n-j} = \frac{z(z-1)\cdots(z-n)}{z-j}\cdot\frac{1}{j!(n-j)!} = (n+1)\binom{z}{n+1}\binom{n}{j}\frac{1}{z-j}.\]
\end{proof}

Identity \eqref{main_lem_id} is very powerful, as will be shown by the number of interesting combinatorial results obtained in what follows.
As a ``warm up", setting $z=-1$ and $x=-1$ in turn yield the elementary identity
$$\sum_{k=0}^n (x+1)^k = \sum_{k=0}^n \binom{n+1}{k+1} x^k$$
and the well-known decomposition into partial fractions as stated in \eqref{partial} (see also \cite{Boyadzhiev})
\begin{equation}\label{eq_9} 
\frac{ n!}{z(z-1)\cdots (z-n)} = \sum_{k=0}^n \binom{n}{k} \frac{(-1)^{n-k}}{z-k}.
\end{equation}
In a way, identity \eqref{main_lem_id} can be considered as generalizing either of these results.\\[0.3cm]

In general, from \eqref{main_lem_id} (or a modified version), we will deduce identities with only one complex parameter and obtain closed forms of combinatorial sums (or identities about them). To illustrate this scheme, here is an introductory example.

\begin{proposition}
Let $n$ be a nonnegative integer and let $x$ and $z$ be two complex numbers such that 
$z\notin\{-1,-2,\ldots,-(n+1)\}$. Then we have
\begin{equation}\label{main_2}
\sum_{k=0}^n \binom{z+k}{k} (x+1)^k = (n+1)\binom{z+n+1}{n+1} \sum_{k=0}^n \binom{n}{k} \frac{x^k}{z+k+1}.
\end{equation}
\end{proposition}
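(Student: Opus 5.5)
We derive \eqref{main_2} from the fundamental lemma \eqref{main_lem_id} by the substitution $z\mapsto -z-1$, using the negation rule
\[
\binom{-z-1}{k}=(-1)^k\binom{z+k}{k},
\]
valid for every complex $z$ and every nonnegative integer $k$.

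First, the hypothesis of the lemma, $-z-1\notin\{0,1,\ldots,n\}$, is exactly $z\notin\{-1,-2,\ldots,-(n+1)\}$, which is our assumption. So we may apply \eqref{main_lem_id} with $-z-1$ in place of $z$.

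On the left-hand side, the negation rule gives $\binom{-z-1}{k}(-1)^k=\binom{z+k}{k}$. This yields exactly $\sum_{k=0}^n\binom{z+k}{k}(x+1)^k$.

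On the right-hand side, the negation rule with $k=n+1$ gives
\[
\binom{-z-1}{n+1}=(-1)^{n+1}\binom{z+n+1}{n+1}.
\]
The denominators become $-z-1-k=-(z+k+1)$, and the resulting factor $-1$ can be pulled out of the sum. Combining this with the prefactor $(-1)^n$, the total sign is
\[
(-1)^n\cdot(-1)^{n+1}\cdot(-1)=1.
\]
The right-hand side is therefore $(n+1)\binom{z+n+1}{n+1}\sum_{k=0}^n\binom{n}{k}\frac{x^k}{z+k+1}$, which is \eqref{main_2}.

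The only delicate point is this sign bookkeeping. The argument is otherwise a direct substitution into the lemma, so no step presents a real obstacle.
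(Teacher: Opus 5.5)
Your proposal is correct and matches the paper's proof: substitute $-(z+1)$ for $z$ in the fundamental lemma and apply $\binom{-(z+1)}{k}=(-1)^k\binom{z+k}{k}$. Your check of the hypothesis translation and of the total sign $(-1)^n(-1)^{n+1}(-1)=1$ is accurate and fills in the details the paper leaves implicit.
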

\begin{proof}
We apply \eqref{main_lem_id} with $-(z+1)$ instead of $z$ and use $\binom{-(z+1)}{k}=(-1)^k\binom{z+k}{k}$.
\end{proof}

The following polynomial identity in the variable $z$ is immediately obtained by taking $x=0$:

\begin{corollary}\label{main_2_cor_1 }
If $z$ is a complex number, then for any integer $n\ge 0$
\begin{equation}
\sum_{k=0}^n \binom{z+k}{k} = \binom{z+n+1}{n}.
\end{equation}
In particular, for any nonnegative integer $q$, we have
\begin{equation*}
\sum_{k=0}^n \binom{q+k}{k} = \binom{q+n+1}{n},
\end{equation*}
the well-known hockey-stick formula (see \cite{Lynch} for a different generalization).
\end{corollary}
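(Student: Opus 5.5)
Set $x=0$ in \eqref{main_2}. On the right only the $k=0$ term survives, because $x^k=0$ for $k\ge1$ and $x^0=1$. This gives
\[
\sum_{k=0}^n \binom{z+k}{k} = (n+1)\binom{z+n+1}{n+1}\frac{1}{z+1},
\]
valid at first for $z\notin\{-1,\ldots,-(n+1)\}$, as the proposition requires.

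The next step is to simplify the right-hand side. Expanding the binomial coefficient,
\[
\binom{z+n+1}{n+1} = \frac{(z+n+1)(z+n)\cdots(z+1)}{(n+1)!},
\]
so multiplying by $(n+1)/(z+1)$ cancels the factor $z+1$ and reduces $(n+1)!$ to $n!$. What remains is
\[
\frac{(z+n+1)\cdots(z+2)}{n!}=\binom{z+n+1}{n}.
\]
Hence the identity holds for every $z$ outside the finite set $\{-1,\ldots,-(n+1)\}$.

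To remove the restriction, observe that both sides are polynomials in $z$ of degree at most $n$. They agree on infinitely many values of $z$, so they agree identically, and the identity holds for all complex $z$. The only point needing care is this extension past the excluded values. One could avoid it altogether by a direct induction on $n$ using Pascal's rule
\[
\binom{z+n+1}{n}+\binom{z+n+1}{n+1}=\binom{z+n+2}{n+1},
\]
but the polynomial argument suffices.

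Finally, taking $z=q$ a nonnegative integer gives the hockey-stick formula stated in the corollary.
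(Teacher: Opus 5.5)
Your proposal is correct and is the paper's own argument: set $x=0$ in \eqref{main_2} and simplify $\frac{n+1}{z+1}\binom{z+n+1}{n+1}$ to $\binom{z+n+1}{n}$. Your polynomial-continuation step, which removes the restriction $z\notin\{-1,\ldots,-(n+1)\}$, makes explicit what the paper leaves implicit when it calls the result ``a polynomial identity in the variable $z$''.
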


The next corollary evaluates a combinatorial sum.

\begin{corollary} 
If $n$ is a nonnegative integer, then the following identity holds:
\[\sum_{k=0}^n \binom{n}{k} \frac{(-1)^k}{2^k(n+k+1)} = \frac{2^n}{(2n+1)\binom{2n}{n}}.\]
\end{corollary}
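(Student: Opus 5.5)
Specialize \eqref{main_2} by taking $z=n$ and $x=-\tfrac12$, so that $x+1=\tfrac12$. The condition $z\notin\{-1,\ldots,-(n+1)\}$ holds because $n\ge 0$, so \eqref{main_2} applies. It gives
\[
\sum_{k=0}^n \binom{n+k}{k}\frac{1}{2^k} = (n+1)\binom{2n+1}{n+1}\sum_{k=0}^n \binom{n}{k}\frac{(-1)^k}{2^k(n+k+1)}.
\]
The right-hand sum is exactly the one to be evaluated. It therefore suffices to find the left-hand side in closed form and simplify the prefactor.

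For the left-hand side I would use the classical identity
\[
\sum_{k=0}^n \binom{n+k}{k}2^{-k}=2^n .
\]
This is the step that needs an actual argument, since it is not among the earlier results of the paper. I would prove it directly as follows.

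Set $S_n=\sum_{k=0}^n\binom{n+k}{k}2^{-k}$. Apply Pascal's rule $\binom{n+k}{k}=\binom{n-1+k}{k}+\binom{n+k-1}{k-1}$ to each term. This gives
\[
S_n = S_{n-1}+\binom{2n-1}{n}2^{-n} + \tfrac12\Bigl(S_n-\binom{2n}{n}2^{-n}\Bigr).
\]
Since $\binom{2n-1}{n}=\tfrac12\binom{2n}{n}$, the two boundary terms cancel. We are left with $S_n=S_{n-1}+\tfrac12 S_n$, that is, $S_n=2S_{n-1}$. Together with $S_0=1$ this yields $S_n=2^n$. An alternative route is the probabilistic Banach-matchbox-type argument, but the recurrence is self-contained.

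Finally, simplify the prefactor:
\[
(n+1)\binom{2n+1}{n+1}=(n+1)\frac{(2n+1)!}{(n+1)!\,n!}=(2n+1)\binom{2n}{n}.
\]
Dividing $2^n$ by this factor gives the stated value $\dfrac{2^n}{(2n+1)\binom{2n}{n}}$. The only genuine obstacle is the evaluation $S_n=2^n$. Everything else is direct substitution into \eqref{main_2}.
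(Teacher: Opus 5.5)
Your proposal is correct and follows the paper's proof exactly: specialize \eqref{main_2} at $z=n$, $x=-\tfrac12$, simplify the prefactor $(n+1)\binom{2n+1}{n+1}=(2n+1)\binom{2n}{n}$, and use $\sum_{k=0}^n\binom{n+k}{k}2^{-k}=2^n$. The paper only cites that last identity as well known, and your Pascal-rule recurrence $S_n=2S_{n-1}$ is a valid self-contained proof of it.
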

\begin{proof}
In \eqref{main_2}, take $z=n,x=-\frac{1}{2}$ and recall the well-known $\sum_{k=0}^n \binom{n+k}{k}\frac{1}{2^k}=2^n$.
\end{proof}

\section{First applications}

We now apply the scheme described and illustrated above to obtain more elaborate identities from \eqref{main_lem_id}. 
Each proposition of this section is the starting point of a new scheme.\\

Our first example is directly obtained by replacing $z$ by $z+1/2$:
\begin{proposition}
Let $x$ and $z$ be two complex numbers with $z\notin\{-1/2,1/2,\ldots,n-1/2\}$. Then we have
\begin{equation}\label{id_C1}
\sum_{k=0}^n \binom{z+1/2}{k} (-1)^k (1+x)^k= (-1)^n 2(n+1) \binom{z+1/2}{n+1}\sum_{k=0}^n \binom{n}{k} \frac{x^k}{2z-2k+1}.
\end{equation}
\end{proposition}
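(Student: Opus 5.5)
The identity \eqref{id_C1} is a direct specialization of the fundamental lemma, so the plan is simply to substitute $z+1/2$ for $z$ in \eqref{main_lem_id} and then rewrite the denominators. First we check the hypothesis. The lemma requires its parameter to avoid $\{0,1,\ldots,n\}$. With the parameter equal to $z+1/2$, this means $z\notin\{-1/2,1/2,\ldots,n-1/2\}$, which is exactly the assumption in the statement. For such $z$, \eqref{main_lem_id} gives
\begin{equation*}
\sum_{k=0}^n \binom{z+1/2}{k} (-1)^k (x+1)^k = (-1)^n (n+1)\binom{z+1/2}{n+1} \sum_{k=0}^n \binom{n}{k} \frac{x^k}{z+1/2-k}.
\end{equation*}

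The second step is purely cosmetic. We write
\[
\frac{1}{z+1/2-k}=\frac{2}{2z-2k+1}
\]
and pull the factor $2$ out of the inner sum into the prefactor. This turns $(-1)^n(n+1)\binom{z+1/2}{n+1}$ into $(-1)^n\,2(n+1)\binom{z+1/2}{n+1}$, which is precisely the right-hand side of \eqref{id_C1}. The left-hand side is unchanged, since $(x+1)^k=(1+x)^k$.

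There is no real obstacle here. The only point needing care is confirming that the excluded set transforms correctly under the shift $z\mapsto z+1/2$. It does, and it also guarantees that no denominator $2z-2k+1$ vanishes for $0\le k\le n$.
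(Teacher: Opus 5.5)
Your proposal is correct and follows the paper's route exactly: the paper obtains this identity directly by replacing $z$ with $z+1/2$ in the fundamental lemma. Your check that the excluded set becomes $\{-1/2,1/2,\ldots,n-1/2\}$ and your rewriting $\frac{1}{z+1/2-k}=\frac{2}{2z-2k+1}$ are the routine details the paper leaves implicit.
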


As a corollary we get two polynomial identities.

\begin{corollary}\label{polynomial1}
If $x$ is a complex number, then for any integer $n\ge 0$ we have
\begin{equation}\label{id 1}
\sum_{k=0}^n \binom{2k}{k} 2^{-2k} x^k = 2^{-2n} (2n+1) \binom{2n}{n} \sum_{k=0}^n \binom{n}{k} (-1)^k \frac{(1-x)^k}{2k+1}
\end{equation}
and
\begin{equation}\label{cor_C11}
\sum_{k=0}^n \binom{2k}{k} 2^{-2k} \frac{(1+x)^k}{2k-1} = 2^{-2n} \binom{2n}{n} \sum_{k=0}^n \binom{n}{k} \frac{x^k}{2k-1}.
\end{equation}
\end{corollary}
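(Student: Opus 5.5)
Both identities are specializations of \eqref{id_C1} at a well-chosen value of $z$. The only inputs needed are the classical evaluations
\[
\binom{-1/2}{k}=(-1)^k\binom{2k}{k}2^{-2k},\qquad \binom{1/2}{k}=\frac{(-1)^{k+1}}{2k-1}\binom{2k}{k}2^{-2k}\qquad(k\ge 0),
\]
together with the simplification $(n+1)\binom{2n+2}{n+1}=2(2n+1)\binom{2n}{n}$. These evaluations follow at once by writing the generalized binomial coefficient as a product and pairing the odd factors with $2^k k!$.

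For \eqref{id 1} I take $z=-1$ in \eqref{id_C1}, which is allowed since $-1\notin\{-1/2,\ldots,n-1/2\}$; then $z+1/2=-1/2$.
\begin{itemize}
\item The left side becomes $\sum_{k}\binom{2k}{k}2^{-2k}(1+x)^k$, because the two factors $(-1)^k$ cancel.
\item On the right, the denominator is $2z-2k+1=-(2k+1)$.
\item The prefactor equals $(-1)^n\,2(n+1)\cdot(-1)^{n+1}\binom{2n+2}{n+1}2^{-2n-2}$. By the simplification above this is $-2^{-2n}(2n+1)\binom{2n}{n}$.
\item The minus sign cancels against the sign coming from the denominator.
\end{itemize}
This gives
\[
\sum_{k=0}^n \binom{2k}{k}2^{-2k}(1+x)^k=2^{-2n}(2n+1)\binom{2n}{n}\sum_{k=0}^n\binom{n}{k}\frac{x^k}{2k+1}.
\]
Finally I replace $x$ by $x-1$, so that $x^k$ becomes $(-1)^k(1-x)^k$; this is exactly \eqref{id 1}.

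For \eqref{cor_C11} I take $z=0$, so that $z+1/2=1/2$.
\begin{itemize}
\item On the left, $\binom{1/2}{k}(-1)^k=-\binom{2k}{k}2^{-2k}/(2k-1)$.
\item On the right, the denominator is $1-2k=-(2k-1)$.
\item The prefactor is $(-1)^n\,2(n+1)\binom{1/2}{n+1}$. Since $\binom{1/2}{n+1}=(-1)^n\binom{2n+2}{n+1}2^{-2n-2}/(2n+1)$, it reduces to $2^{-2n}\binom{2n}{n}$.
\end{itemize}
Both sides therefore carry an overall minus sign, and cancelling it yields \eqref{cor_C11}.

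The argument involves no genuine obstacle. The only place where care is needed is the sign and power-of-two bookkeeping in the prefactor $(-1)^n2(n+1)\binom{z+1/2}{n+1}$. I would check it by testing $n=0$: for \eqref{id 1} both sides equal $1$, and for \eqref{cor_C11} both sides equal $-1$.
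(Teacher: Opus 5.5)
Your proposal is correct and matches the paper's proof: specialize \eqref{id_C1} at $z=-1$ (then replace $x$ by $x-1$) and at $z=0$, using the closed forms of $\binom{\mp 1/2}{k}$ and the simplification $(n+1)\binom{2n+2}{n+1}=2(2n+1)\binom{2n}{n}$. Your sign and power-of-two bookkeeping in both prefactors checks out.
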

\begin{proof}
To obtain \eqref{id 1}, we take $z=-1$ in \eqref{id_C1}, use 
$$\binom{-1/2}{k}=(-1)^k 2^{-2k}\binom{2k}{k}$$ 
and then replace $x$ by $x-1$. \\
Second, we take $z=0$ in \eqref{id_C1} and use 
$$\binom{1/2}{k}=(-1)^{k-1}2^{-2k}\frac{1}{2k-1}\binom{2k}{k}.$$ 
In both cases, we simplify with the help of
$$\binom{2(n+1)}{n+1} = \frac{2(2n+1)}{n+1} \binom{2n}{n}.$$
\end{proof}

\begin{remark}
Identity \eqref{id 1} is known as Carlitz's polynomial relation \cite{Carlitz}. Identity \eqref{cor_C11} seems to be new.
\end{remark}

Two known examples from the previous corollary are
$$\sum_{k=0}^n \binom{2k}{k} 2^{-2k} \frac{1}{2k-1} = -2^{-2n} \binom{2n}{n},$$
and
$$\sum_{k=0}^n \binom{n}{k} \frac{(-1)^k}{2k-1} = - \frac{2^{2n}}{\binom{2n}{n}}.$$
The first example appears in Riordan's book \cite[p. 130]{Riordan} and also as equation (31) in \cite{Adegoke}. \\

In a similar way, we obtain

\begin{proposition}
Let $x$ and $z$ be two complex numbers with $z\notin\{1/2,3/2,\ldots,n+1/2\}$. Then we have
\begin{equation}\label{id_C2}
\sum_{k=0}^n \binom{z-1/2}{k}(-1)^k(x+1)^k= (-1)^n 2(n+1) \binom{z-1/2}{n+1}\sum_{k=0}^n \binom{n}{k} \frac{x^k}{2z-2k-1}.
\end{equation}
\end{proposition}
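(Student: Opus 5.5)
This is a direct specialization of the fundamental lemma, obtained exactly as \eqref{id_C1} was: I will apply \eqref{main_lem_id} with $z$ replaced by $z-1/2$. The lemma requires the new parameter to avoid $\{0,1,\ldots,n\}$. The condition $z-1/2\notin\{0,1,\ldots,n\}$ is precisely $z\notin\{1/2,3/2,\ldots,n+1/2\}$, which is the stated hypothesis. So the lemma applies for every complex $x$.

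After the substitution, the left-hand side of \eqref{main_lem_id} becomes $\sum_{k=0}^n \binom{z-1/2}{k}(-1)^k(x+1)^k$, which is already the left side of \eqref{id_C2}. The right-hand side becomes
\[
(-1)^n(n+1)\binom{z-1/2}{n+1}\sum_{k=0}^n\binom{n}{k}\frac{x^k}{z-1/2-k}.
\]
The only remaining step is to rewrite each denominator as $z-1/2-k=\frac{2z-2k-1}{2}$. This gives
\[
\frac{x^k}{z-1/2-k}=\frac{2x^k}{2z-2k-1},
\]
and pulling the factor $2$ out of the sum produces the coefficient $(-1)^n\,2(n+1)\binom{z-1/2}{n+1}$, which is exactly \eqref{id_C2}.

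There is no real obstacle. The one point to check is that the denominators $2z-2k-1$ never vanish for $0\le k\le n$. This is guaranteed by the same excluded set $\{1/2,\ldots,n+1/2\}$, so the identity is valid on the whole stated domain.
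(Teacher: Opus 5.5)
Your proposal is correct and follows the paper's own route: the paper obtains this identity ``in a similar way'' to \eqref{id_C1}, by substituting $z-1/2$ for $z$ in the fundamental lemma and writing $z-1/2-k=(2z-2k-1)/2$. Your check that the excluded set $\{1/2,\ldots,n+1/2\}$ is exactly what the lemma's hypothesis requires is also correct.
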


\begin{corollary}\label{polynomial2}
For each complex $x$ we have
\begin{equation}\label{cor_C21}
\sum_{k=0}^n \frac{\binom{n}{k}}{\binom{2k}{k}} 2^{2k} (-1)^{k+1} (1+x)^{n-k} = \sum_{k=0}^n \binom{n}{k} \frac{x^{n-k}}{2k-1}.
\end{equation}
\end{corollary}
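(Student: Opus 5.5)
The plan is to specialize \eqref{id_C2} at $z=n$ and then reindex. This choice is dictated by the right-hand side of \eqref{cor_C21}. Under $k\mapsto n-k$ it becomes $\sum_{k=0}^n\binom{n}{k}\frac{x^k}{2n-2k-1}$, which is exactly the inner sum in \eqref{id_C2} when $z=n$. The value $z=n$ is an integer, so it avoids the forbidden set $\{1/2,3/2,\ldots,n+1/2\}$.

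With $z=n$, \eqref{id_C2} reads
\[
\sum_{k=0}^n \binom{n-1/2}{k}(-1)^k(x+1)^k=(-1)^n\,2(n+1)\binom{n-1/2}{n+1}\sum_{k=0}^n\binom{n}{k}\frac{x^{n-k}}{2k-1}.
\]
I then divide by the prefactor $(-1)^n2(n+1)\binom{n-1/2}{n+1}$, which is nonzero because $n-1/2$ is not an integer. I also reindex the left side by $k\mapsto n-k$, so that it becomes
\[
\sum_{k=0}^n\binom{n-1/2}{n-k}(-1)^{n-k}(x+1)^{n-k}.
\]

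The remaining work, and the only real computation, is to show that
\[
\frac{(-1)^{n-k}\binom{n-1/2}{n-k}}{(-1)^n\,2(n+1)\binom{n-1/2}{n+1}}=(-1)^{k+1}\,2^{2k}\,\frac{\binom{n}{k}}{\binom{2k}{k}} .
\]
I would write each coefficient in Gamma form, $\binom{n-1/2}{m}=\frac{\Gamma(n+1/2)}{m!\,\Gamma(n-m+1/2)}$, so that $\Gamma(n+1/2)$ cancels in the ratio. Two evaluations then finish it:
\[
\Gamma(k+1/2)=\frac{(2k)!\sqrt{\pi}}{4^k\,k!},\qquad \Gamma(-1/2)=-2\sqrt{\pi}.
\]
With these, the ratio reduces to
\[
\frac{(-1)^{k}}{(-1)^n\cdot 2(n+1)}\cdot(-1)^n\cdot\frac{(n+1)!\,(-2)\,4^k\,k!}{(n-k)!\,(2k)!}=(-1)^{k+1}4^k\frac{n!}{(n-k)!}\frac{k!}{(2k)!},
\]
which is the claimed coefficient.

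The main point needing care is this sign and constant bookkeeping: the factor $(-1)^{n-k}$ from the reindexing, the negative value $\Gamma(-1/2)$, and the factor $2(n+1)$ against $(n+1)!$. If the Gamma route becomes messy, the alternative is elementary. I would use $\binom{n-1/2}{n+1}=\binom{n-1/2}{n}\cdot\frac{-1/2}{n+1}$ together with $\binom{n-1/2}{n}=2^{-2n}\binom{2n}{n}$. I would then compute $\binom{n-1/2}{n-k}/\binom{n-1/2}{n}$ as the product $\prod_{j=0}^{k-1}\frac{n-j}{j+1/2}=\frac{n!}{(n-k)!}\cdot\frac{4^k k!}{(2k)!}$. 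This gives the same result, and comparing with the reindexed identity yields \eqref{cor_C21}.
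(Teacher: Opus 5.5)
Your proposal is correct and follows the paper's own proof. Both specialize \eqref{id_C2} at $z=n$ and rewrite $\binom{n-1/2}{n-k}$ and $\binom{n-1/2}{n+1}$ in terms of central binomial coefficients. Your coefficient ratio $(-1)^{k+1}2^{2k}\binom{n}{k}/\binom{2k}{k}$, including the sign from $\Gamma(-1/2)=-2\sqrt{\pi}$ and the reindexing, checks out by either of your two routes.
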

\begin{proof}
Set $z=n$ in \eqref{id_C2} and use the easily checked formulas:
$$\binom{n-1/2}{k}=2^{-2k}\frac{\binom{2n}{n}\binom{n}{k}}{\binom{2n-2k}{n-k}}\qquad (k=0,1,\ldots,n)$$
and
$$\binom{n-1/2}{n+1}=-\frac{2^{-2n-1}}{n+1}\binom{2n}{n}.$$
\end{proof}

As an explicit evaluation we offer the sum
$$\sum_{k=0}^n \frac{\binom{n}{k}}{\binom{2k}{k}} 2^{2k} (-1)^{k+1} = \frac{1}{2n-1}.$$
This sum is also given by equation (32) in \cite{Adegoke}.\\ 

The polynomial identities stated in Corollaries \ref{polynomial1} and \ref{polynomial2} can be merged in two
different ways, hereby leading to additional interesting relations and evaluations.

\begin{corollary}
For each complex $x\neq -1$ we have the following relation
\begin{equation}\label{id_C3}
\sum_{k=0}^n \binom{2k}{k} \frac{2^{-2k}}{2k-1} \left (\frac{1}{1+x}\right )^{n-k}
= 2^{-2n} \binom{2n}{n} \sum_{k=0}^n \frac{\binom{n}{k}}{\binom{2k}{k}} 2^{2k} (-1)^{k+1}\left ( \frac{x}{1+x}\right )^{k}.
\end{equation}
In particular,
\begin{equation*}
\sum_{k=0}^n \binom{2k}{k} \frac{2^{-k}}{2k-1} =2^{-n}\binom{2n}{n}\sum_{k=0}^n\frac{\binom{n}{k}}{\binom{2k}{k}}(-1)^{k+1} 2^{k}
\end{equation*}
and
\begin{equation*}
\sum_{k=0}^n \binom{2k}{k} \frac{2^{-3k}}{2k-1} = - 2^{-3n} \binom{2n}{n} \sum_{k=0}^n \frac{\binom{n}{k}}{\binom{2k}{k}}2^{2k}.
\end{equation*}
\end{corollary}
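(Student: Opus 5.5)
The plan is to eliminate the common sum $\sum_{k=0}^n \binom{n}{k}\frac{x^k}{2k-1}$ between \eqref{cor_C11} and \eqref{cor_C21}. Note that in \eqref{cor_C21} this sum appears with the powers reversed, as $x^{n-k}$.

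First I rewrite \eqref{cor_C21} so that it carries $x^k$. Assume temporarily $x\neq 0$ and apply \eqref{cor_C21} with $1/x$ in place of $x$. This gives
\[
\sum_{k=0}^n \binom{n}{k}\frac{x^{-(n-k)}}{2k-1}=\sum_{k=0}^n \frac{\binom{n}{k}}{\binom{2k}{k}}2^{2k}(-1)^{k+1}\Bigl(1+\frac1x\Bigr)^{n-k}.
\]
Multiplying by $x^n$ yields
\[
\sum_{k=0}^n \binom{n}{k}\frac{x^k}{2k-1}=\sum_{k=0}^n \frac{\binom{n}{k}}{\binom{2k}{k}}2^{2k}(-1)^{k+1}(1+x)^{n-k}x^{k}.
\]
Both sides are polynomials in $x$, so the identity extends to $x=0$ by continuity, or simply by polynomial identity.

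Next I substitute this expression into the right-hand side of \eqref{cor_C11}, keeping the same $x$. Then I divide both sides by $(1+x)^n$, which is allowed since $x\neq -1$. On the left, $(1+x)^k/(1+x)^n=(1/(1+x))^{n-k}$. On the right, $(1+x)^{n-k}x^k/(1+x)^n=(x/(1+x))^k$. This is exactly \eqref{id_C3}.

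For the particular cases I specialize $x$.

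\emph{Case $x=1$.} Here $1/(1+x)=x/(1+x)=1/2$. Multiplying both sides by $2^n$ turns $2^{-2k}2^{k}$ into $2^{-k}$ on the left and $2^{2k}2^{-k}$ into $2^{k}$ on the right. This gives the first displayed formula.

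\emph{Case $x=-1/2$.} Here $1/(1+x)=2$ and $x/(1+x)=-1$. The left side becomes $2^n\sum_{k=0}^n\binom{2k}{k}2^{-3k}/(2k-1)$. On the right, the signs combine as $(-1)^{k+1}(-1)^k=-1$. Dividing by $2^n$ gives the second formula.

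The only delicate point is the substitution $x\mapsto 1/x$ and the resulting restriction $x\neq0$. It is removed by the polynomial nature of the identity, so I expect no real obstacle beyond careful bookkeeping of the powers of $2$ in the special cases.
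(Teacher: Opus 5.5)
Your proposal is correct and follows essentially the same route as the paper. You rewrite \eqref{cor_C21} with $1/x$ in place of $x$ and multiply by $x^n$, substitute into \eqref{cor_C11}, divide by $(1+x)^n$, and then specialize to $x=1$ and $x=-1/2$. Your explicit remark that the temporary restriction $x\neq 0$ is removed because both sides are polynomials is a small point the paper leaves implicit.
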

\begin{proof}
From \eqref{cor_C11} we deduce that
\[ \sum_{k=0}^n \binom{2k}{k}\frac{2^{-2k}}{2k-1}(1+x)^{k-n}=2^{-2n}\binom{2n}{n}(1+x)^{-n}x^n\sum_{k=0}^n\binom{n}{k}\frac{1}{2k-1}\frac{1}{x^{n-k}}.\]
Applying \eqref{cor_C21} (with $\frac{1}{x}$ instead of $x$) gives the conclusion. The special cases follow upon setting first $x=1$ and then $x=-1/2$ in \eqref{id_C3}.
\end{proof}

We advantageously rewrite \eqref{cor_C11} and \eqref{cor_C21} as follows:

\begin{corollary}
\begin{equation}\label{id_Ca}
\sum_{k=0}^n\frac{\binom{n}{k}}{\binom{2k}{k}}2^{2k}(-1)^{k+1} x^k (1+x)^{n-k} = \sum_{k=0}^n\binom{n}{k}\frac{x^k}{2k-1}
=\frac{2^{2n}}{\binom{2n}{n}}\sum_{k=0}^n\binom{2k}{k} 2^{-2k}\frac{(1+x)^{k}}{2k-1}.
\end{equation}
In particular, we get the following identities involving the Catalan numbers $C_n=\frac{1}{n+1}\binom{2n}{n}$
\begin{equation}\label{id_Cb}
\sum_{k=0}^n \frac{2^{2k}}{\binom{2k}{k}} = \frac{1}{3} \left(1+\frac{2^{2n+1}}{C_n} \right)
\end{equation}
and for $n\ge 1$,
\begin{equation}\label{id_Cc}
\sum_{k=1}^n C_k \frac{2^{-2k}}{2k-1} = \frac{1}{3}\left( 1 - 2^{-2n} C_n \right).
\end{equation}
\end{corollary}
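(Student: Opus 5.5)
The first chain of equalities in \eqref{id_Ca} is just a reformulation of the two polynomial identities of Corollaries \ref{polynomial1} and \ref{polynomial2}, so I would derive it directly from them.

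\emph{First equality.} I replace $x$ by $1/x$ in \eqref{cor_C21} (for $x\neq 0$) and multiply both sides by $x^n$. On the left, $x^n(1+1/x)^{n-k}=x^k(1+x)^{n-k}$, which produces the left member of \eqref{id_Ca}. On the right, $x^n\cdot x^{-(n-k)}=x^k$, which gives $\sum_k\binom{n}{k}x^k/(2k-1)$. Both sides are polynomials in $x$, so the restriction $x\neq0$ is removed by continuity (or by comparing coefficients).

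\emph{Second equality.} This is \eqref{cor_C11} multiplied by $2^{2n}/\binom{2n}{n}$.

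For the Catalan identities I do not see a specialization of $x$ that collapses the binomial factor $\binom{n}{k}$. For instance, $x=-1$ only reproduces the known evaluation $\sum_k\binom nk(-1)^k/(2k-1)=-2^{2n}/\binom{2n}{n}$. I would therefore prove \eqref{id_Cb} and \eqref{id_Cc} by induction on $n$, using $C_{n+1}=\frac{2(2n+1)}{n+2}C_n$ and $\binom{2n+2}{n+1}=\frac{2(2n+1)}{n+1}\binom{2n}{n}$ (the latter is already used in Corollary \ref{polynomial1}).

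\emph{Proof of \eqref{id_Cb}.} The base case $n=0$ gives $1=(1+2)/3$. For the inductive step I need
\[
\frac{2^{2n+2}}{\binom{2n+2}{n+1}}=\frac13\Bigl(\frac{2^{2n+3}}{C_{n+1}}-\frac{2^{2n+1}}{C_n}\Bigr).
\]
The left side equals $\frac{2^{2n+1}}{(2n+1)C_n}$. On the right, $\frac{2^{2n+3}}{C_{n+1}}=\frac{2^{2n+2}(n+2)}{(2n+1)C_n}$, so the bracket is
\[
\frac{2^{2n+1}}{C_n}\cdot\frac{2(n+2)-(2n+1)}{2n+1}=\frac{3\cdot 2^{2n+1}}{(2n+1)C_n},
\]
as required.

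\emph{Proof of \eqref{id_Cc}.} The base case $n=1$ gives $\tfrac14=\tfrac13(1-\tfrac14)$. For the inductive step I need
\[
\frac{C_{n+1}2^{-2n-2}}{2n+1}=\frac13\bigl(2^{-2n}C_n-2^{-2n-2}C_{n+1}\bigr).
\]
Substituting $C_{n+1}=\frac{2(2n+1)}{n+2}C_n$, both sides reduce to $\frac{2^{-2n}C_n}{2(n+2)}$.

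The only step that needs care is this bookkeeping of the ratio $C_{n+1}/C_n$; everything else is direct substitution into the earlier corollaries.
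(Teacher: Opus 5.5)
Your proof is correct. The derivation of the chain \eqref{id_Ca} is exactly the paper's: substitute $1/x$ into \eqref{cor_C21}, multiply by $x^n$, and rescale \eqref{cor_C11}.

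For the Catalan identities you take a genuinely different route. Your inductive steps check out: both sides reduce to $\frac{2^{2n+1}}{(2n+1)C_n}$, resp.\ $\frac{2^{-2n}C_n}{2(n+2)}$, and the base cases are right. The paper instead derives both identities from \eqref{id_Ca}. The device that removes the factor $\binom{n}{k}$ is not a specialization of $x$ but integration. The paper replaces $x$ by $-x$ and integrates over $[0,1]$, using $\int_0^1 x^k(1-x)^{n-k}\,dx=\frac{1}{(n+1)\binom{n}{k}}$ and $\int_0^1(1-x)^k\,dx=\frac{1}{k+1}$. The three members of \eqref{id_Ca} then become $-\frac{1}{n+1}\sum_k\frac{2^{2k}}{\binom{2k}{k}}$, $\sum_k\binom{n}{k}\frac{(-1)^k}{(k+1)(2k-1)}$ and $\frac{2^{2n}}{\binom{2n}{n}}\sum_k C_k\frac{2^{-2k}}{2k-1}$. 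The middle sum is evaluated with $\frac{1}{(k+1)(2k-1)}=\frac{2}{3}\cdot\frac{1}{2k-1}-\frac{1}{3}\cdot\frac{1}{k+1}$ and equals $-\frac{2}{3}\frac{2^{2n}}{\binom{2n}{n}}-\frac{1}{3(n+1)}$. This gives \eqref{id_Cb} directly, and \eqref{id_Cc} after removing the $k=0$ term, which is $-1$.

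Your induction is shorter, fully elementary, and independent of \eqref{id_Ca}. However, it can only verify closed forms already known in advance. It also does not realize the ``in particular'': it never shows these identities follow from \eqref{id_Ca}. The paper's integration argument actually produces the closed forms. It also shows that \eqref{id_Ca} ties the two Catalan sums and the mixed sum $\sum_k\binom{n}{k}\frac{(-1)^k}{(k+1)(2k-1)}$ together, so a single elementary evaluation settles all three.
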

\begin{proof}
In \eqref{cor_C21}, replace $x$ by $\frac{1}{x}$ and multiply by $x^n$. This gives the main statement. In \eqref{id_Ca}, we replace $x$ by $-x$ and integrate from $0$ to $1$ to obtain:
\begin{equation*}%\label{id_Cd}
-\sum_{k=0}^n \frac{\binom{n}{k}2^{2k}}{\binom{2k}{k}}\frac{1}{(n+1)\binom{n}{k}}
= \sum_{k=0}^n\binom{n}{k}\frac{(-1)^k}{(k+1)(2k-1)} = \frac{2^{2n}}{\binom{2n}{n}}\sum_{k=0}^n\frac{\binom{2k}{k}}{2^{2k}(k+1)(2k-1)}.
\end{equation*}
From 
$$\frac{1}{(k+1)(2k-1)} = \frac{2}{3}\cdot\frac{1}{2k-1} - \frac{1}{3}\cdot\frac{1}{k+1},$$ 
we then deduce that
\begin{align*}
\sum_{k=0}^n\binom{n}{k}\frac{(-1)^k}{(k+1)(2k-1)} &= \frac{2}{3}\sum_{k=0}^n\binom{n}{k}\frac{(-1)^k}{2k-1}-\frac{1}{3}\sum_{k=0}^n\binom{n}{k}\frac{(-1)^k}{k+1} \\
&= -\frac{2}{3}\frac{2^{2n}}{\binom{2n}{n}}-\frac{1}{3(n+1)}
\end{align*}
(on the right, the first sum has been met earlier and the second sum follows from the obvious $\sum_{k=0}^n\binom{n}{k}\frac{x^{k+1}}{k+1}=\frac{(x+1)^{n+1}-1}{n+1}$).
\end{proof}

\begin{remark}
Identity \eqref{id_Cb} is a classical result that appeared in a range of papers. It was generalized in a different way in \cite{Bataille2}.
\end{remark}

We now modify \eqref{main_lem_id} \textit{via} integration relatively to $x$: 

\begin{proposition}
Let $n$ be a nonnegative integer and let $x$ and $z$ be two complex numbers such that $z\notin\{0,1,\ldots,n\}$. Then we have
\begin{equation}\label{id_Cz}
\sum_{k=0}^n\binom{z}{k}\frac{(-1)^k}{k+1}(1+x)^{k+1}=\sum_{k=0}^n\binom{z}{k}\frac{(-1)^k}{k+1}+(-1)^n(n+1)\binom{z}{n+1}\sum_{k=0}^n\binom{n}{k}\frac{x^{k+1}}{(k+1)(z-k)}
\end{equation}
\end{proposition}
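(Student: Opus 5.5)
The plan is to integrate the identity \eqref{main_lem_id} of the fundamental lemma with respect to $x$, as announced just before the statement. Fix $n$ and $z\notin\{0,1,\ldots,n\}$. Both sides of \eqref{main_lem_id} are then polynomials in $x$: the left side has degree at most $n$, and the right side is a finite linear combination of the monomials $x^k$ with coefficients $\binom{n}{k}/(z-k)$, which are well defined by the hypothesis on $z$. Since they agree as polynomial functions of the complex variable $x$, their antiderivatives vanishing at a common base point also agree. Hence I will integrate both sides from $0$ to $x$, term by term. This is legitimate for polynomials; one may either work formally or take the integral along the segment $[0,x]$ in $\mathbb C$.

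On the right-hand side, $\int_0^x t^k\,dt = \frac{x^{k+1}}{k+1}$. This immediately produces
\[
(-1)^n(n+1)\binom{z}{n+1}\sum_{k=0}^n\binom{n}{k}\frac{x^{k+1}}{(k+1)(z-k)},
\]
which is the last term of \eqref{id_Cz}.

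On the left-hand side, $\int_0^x (1+t)^k\,dt=\frac{(1+x)^{k+1}-1}{k+1}$. Therefore
\[
\int_0^x\sum_{k=0}^n\binom{z}{k}(-1)^k(1+t)^k\,dt=\sum_{k=0}^n\binom{z}{k}\frac{(-1)^k}{k+1}(1+x)^{k+1}-\sum_{k=0}^n\binom{z}{k}\frac{(-1)^k}{k+1}.
\]
Moving the constant sum to the other side gives exactly \eqref{id_Cz}.

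There is no real obstacle. The only point requiring care is the choice of base point $x=0$ for the integration. The constant of integration is then the value of the left antiderivative at $x=0$, namely $\sum_{k=0}^n\binom{z}{k}\frac{(-1)^k}{k+1}$, while the right antiderivative vanishes at $x=0$. This accounts for the extra constant sum in the statement.

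As a sanity check, set $x=0$ in \eqref{id_Cz}: both sides reduce to $\sum_{k=0}^n\binom{z}{k}\frac{(-1)^k}{k+1}$. Equivalently, differentiating \eqref{id_Cz} with respect to $x$ recovers \eqref{main_lem_id}. Together these two observations give an alternative proof: two polynomials with equal derivatives that agree at one point are equal.
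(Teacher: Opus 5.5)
Your proposal is correct and matches the paper's proof: integrate \eqref{main_lem_id} term by term from $0$ to $x$, with the constant sum coming from the lower limit of $\int_0^x(1+t)^k\,dt$. The paper integrates over real $x$ and then extends the resulting polynomial identity to complex $x$, whereas you integrate formally or along a segment in $\mathbb{C}$; this difference is inessential.
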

\begin{proof}
In \eqref{main_lem_id}, consider $x$ as a real variable and integrate from $0$ to the  real $X$, to obtain a polynomial identity in $X$; then, rename $X$ as $x$. 
\end{proof}

\begin{corollary}
If $z$ is a complex number with $z\neq -1$, then we have
\begin{equation}\label{id_Cx}
\sum_{k=0}^n \binom{z}{k} \frac{(-1)^k}{k+1} = \frac{1}{z+1}\left (1+(-1)^n \binom{z}{n+1}\right ).
\end{equation}
In addition, if $z\notin\{-1,0,1,\ldots,n\}$, then 
\begin{equation}\label{id_Cy}
\sum_{k=0}^{\lfloor{n/2}\rfloor} \binom{z}{2k} \frac{1}{2k+1} = \frac{1}{z+1} \binom{z}{n+1} \left(\frac{(-1)^{n}+1}{2} + (-1)^n (n+1) 
\sum_{k=0}^n \binom{n}{k} \frac{(-1)^k 2^k}{z-k} \right ).
\end{equation}
\end{corollary}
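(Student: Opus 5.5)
Both identities come from specializing \eqref{id_Cz} at suitable values of $x$.

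For \eqref{id_Cx}, I take $x=-1$ in \eqref{id_Cz}. The left side vanishes. The double sum becomes
\[
\sum_{k=0}^n\binom{n}{k}\frac{(-1)^{k+1}}{(k+1)(z-k)}.
\]
To evaluate it I use the partial fraction
\[
\frac{1}{(k+1)(z-k)}=\frac{1}{z+1}\Bigl(\frac{1}{k+1}+\frac{1}{z-k}\Bigr).
\]
This splits the sum into two known pieces:
\begin{itemize}
\item $\sum_{k}\binom{n}{k}\frac{(-1)^k}{k+1}=\frac{1}{n+1}$, from the integrated binomial theorem already used in the proof of \eqref{id_Cc};
\item $\sum_k\binom{n}{k}\frac{(-1)^k}{z-k}=(-1)^n\frac{n!}{z(z-1)\cdots(z-n)}=\frac{(-1)^n}{(n+1)\binom{z}{n+1}}$, from \eqref{eq_9}.
\end{itemize}
Multiplying by $(-1)^n(n+1)\binom{z}{n+1}$ and moving terms gives
\[
\sum_k\binom{z}{k}\frac{(-1)^k}{k+1}=\frac{1}{z+1}\Bigl((-1)^n\binom{z}{n+1}+1\Bigr),
\]
with the signs to be checked carefully. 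This proves \eqref{id_Cx} for $z\notin\{-1,0,\dots,n\}$. Both sides are rational in $z$ with no pole at $0,\dots,n$, so continuity (or a polynomial-identity argument after multiplying by $z+1$) extends it to all $z\neq-1$.

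For \eqref{id_Cy}, I take $x=-2$ in \eqref{id_Cz}, so that $(1+x)^{k+1}=(-1)^{k+1}$. The left side is then $-\sum_k\binom{z}{k}\frac{1}{k+1}$. Adding the first sum on the right, $\sum_k\binom{z}{k}\frac{(-1)^k}{k+1}$, and combining gives
\[
-\sum_k\binom{z}{k}\frac{1-(-1)^k}{k+1}\cdot(\text{sign}),
\]
which isolates a sum over one parity of $k$. I expect the main obstacle to be this step: matching the odd-index sum $\sum\binom{z}{2j+1}\frac{1}{2j+2}$ to the stated even-index sum $\sum\binom{z}{2k}\frac{1}{2k+1}$.

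To handle it, I will use
\[
\binom{z}{k}\frac{1}{k+1}=\frac{1}{z+1}\binom{z+1}{k+1},
\]
which turns the left side into alternating or full sums of $\binom{z+1}{m}$. Perhaps more cleanly, I will go back to the unintegrated lemma: integrate \eqref{main_lem_id} from $x=-2$ to $x=0$, so that the variable $t=1+x$ runs over $[-1,1]$ and the odd powers of $t$ cancel. This yields
\[
\sum_k\binom{z}{k}(-1)^k\frac{1-(-1)^{k+1}}{k+1},
\]
which is exactly $2\sum_{k\ \mathrm{even}}\binom{z}{k}\frac{1}{k+1}$. That is the desired left side up to the factor $2$.

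On the right side, $\int_{-2}^{0}x^k\,dx=\frac{(-1)^k2^{k+1}}{k+1}$. The resulting $\frac{1}{(k+1)(z-k)}$ is again split by the partial fraction above. The $\frac{1}{k+1}$ part gives $\sum\binom{n}{k}\frac{(-2)^{k+1}}{k+1}=\frac{(-1)^{n+1}-1}{n+1}$, which produces the term $\frac{(-1)^n+1}{2}$ after normalizing. The $\frac{1}{z-k}$ part gives $\sum_k\binom{n}{k}\frac{(-1)^k2^k}{z-k}$. Collecting the factor $\frac{1}{z+1}\binom{z}{n+1}$ and the sign $(-1)^n$ then gives \eqref{id_Cy}; the remaining work is bookkeeping of signs and factors of $2$.
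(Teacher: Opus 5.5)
Your proposal is correct and is essentially the paper's proof: specialize \eqref{id_Cz} at $x=-1$ and $x=-2$ (your integration of \eqref{main_lem_id} over $[-2,0]$ is the same computation, since \eqref{id_Cz} is $\int_0^x$ of \eqref{main_lem_id}), split $\frac{1}{(k+1)(z-k)}$ by partial fractions, and finish with \eqref{eq_9} and $\sum_k\binom{n}{k}\frac{y^{k+1}}{k+1}=\frac{(1+y)^{n+1}-1}{n+1}$. Your continuity argument extending \eqref{id_Cx} to $z\in\{0,\dots,n\}$ covers a point the paper leaves implicit.

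The ``odd-index obstacle'' is only a sign slip. At $x=-2$, subtracting (not adding) the first right-hand sum gives $-\sum_k\binom{z}{k}\frac{1+(-1)^k}{k+1}$, which is the even part directly. Similarly, the $\frac{1}{k+1}$-piece in your integrated version is $\sum_k\binom{n}{k}\frac{(-1)^k2^{k+1}}{k+1}=\frac{1+(-1)^n}{n+1}$, the negative of what you wrote. With that sign it produces $\frac{(-1)^n+1}{2}$.
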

\begin{proof}
Taking successively $x=-1$ and $x=-2$ in \eqref{id_Cz} gives 
\begin{equation*}
\sum_{k=0}^n\binom{z}{k}\frac{(-1)^k}{k+1} = (-1)^n(n+1)\binom{z}{n+1}\sum_{k=0}^n\binom{n}{k}\frac{(-1)^k}{(k+1)(z-k)}
\end{equation*}
and
\begin{equation*}
\sum_{k=0}^{\lfloor{n/2}\rfloor}\binom{z}{2k}\frac{1}{2k+1} = (-1)^n(n+1)\binom{z}{n+1}\sum_{k=0}^n\binom{n}{k}\frac{(-1)^k 2^k}{(k+1)(z-k)}.
\end{equation*}
We simplify further according to
$$\frac{1}{(z-k)(k+1)} = \frac{1}{z+1}\left (\frac{1}{z-k}+\frac{1}{k+1} \right ).$$
Identity \eqref{id_Cx} follows as we can calculate
\begin{align*}
\sum_{k=0}^n \binom{z}{k} \frac{(-1)^k}{k+1} &= \frac{(-1)^n (n+1)}{z+1}\binom{z}{n+1}\left ( \sum_{k=0}^n \binom{n}{k} \frac{(-1)^k}{z-k} + 
\sum_{k=0}^n \binom{n}{k} \frac{(-1)^k}{k+1} \right ) \\
&= \frac{(-1)^n (n+1)}{z+1}\binom{z}{n+1}\left ( \frac{(-1)^n}{(n+1)\binom{z}{n+1}} + \frac{1}{n+1} \right ),
\end{align*}
where we have also used \eqref{eq_9}. Identity \eqref{id_Cy} is similarly obtained. 
\end{proof}

For later use note that the replacement of $z$ with $-(z+1)$ in \eqref{id_Cx} shows that it can be equivalently written as 
\begin{equation}\label{eq_8}
\sum_{k=0}^n \binom{z+k}{k} \frac{1}{k+1} = \frac{1}{z} \left(\binom{z+n+1}{n+1} - 1 \right)
\end{equation}
for $z\neq 0$.

\begin{corollary}
The following evaluations hold
\begin{equation}\label{Sn1}
\sum_{k=0}^n \binom{n}{k} \frac{(-1)^k}{(k+1)(n-k+1)} = \frac{1+(-1)^n}{(n+1)(n+2)}
\end{equation}
and
\begin{equation}
\sum_{k=0}^{\lfloor{n/2}\rfloor} \binom{n+1}{2k} \frac{1}{2k+1} = \frac{1}{n+2}\left (2^{n+1} + \frac{(-1)^n-1}{2}\right ).
\end{equation}
\end{corollary}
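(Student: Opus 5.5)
Both evaluations come from the case $z=n+1$ of the preceding corollary. This value is admissible because $n+1\notin\{-1,0,1,\ldots,n\}$, and it has the advantage that $\binom{z}{n+1}=\binom{n+1}{n+1}=1$. The one extra ingredient is the absorption identity
\[\frac{1}{n+1-k}\binom{n}{k}=\frac{1}{n+1}\binom{n+1}{k},\]
which turns the remaining sums over $\binom{n}{k}$ into sums over $\binom{n+1}{k}$ that we can evaluate.

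For \eqref{Sn1}, I start from the first intermediate formula in the proof of the previous corollary (the case $x=-1$ of \eqref{id_Cz}):
\[\sum_{k=0}^n\binom{z}{k}\frac{(-1)^k}{k+1} = (-1)^n(n+1)\binom{z}{n+1}\sum_{k=0}^n\binom{n}{k}\frac{(-1)^k}{(k+1)(z-k)}.\]
With $z=n+1$ the right-hand side becomes $(-1)^n(n+1)$ times the sum we want. The left-hand side is evaluated by \eqref{id_Cx} at $z=n+1$, which gives $\frac{1}{n+2}\bigl(1+(-1)^n\bigr)$. Dividing by $(-1)^n(n+1)$ and noting that $(-1)^n\bigl(1+(-1)^n\bigr)=1+(-1)^n$ yields \eqref{Sn1}.

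For the second evaluation, I put $z=n+1$ in \eqref{id_Cy}. The left-hand side is then exactly the desired sum. On the right-hand side the inner sum is $\sum_{k=0}^n\binom{n}{k}\frac{(-2)^k}{n+1-k}$. By the absorption identity it equals
\[\frac{1}{n+1}\sum_{k=0}^{n}\binom{n+1}{k}(-2)^k.\]
The binomial theorem evaluates this as $\frac{1}{n+1}\bigl((-1)^{n+1}-(-2)^{n+1}\bigr)$. Multiplying by $(-1)^n(n+1)$ gives $2^{n+1}-1$. Adding $\frac{(-1)^n+1}{2}$ and dividing by $n+2$ then produces $\frac{1}{n+2}\bigl(2^{n+1}+\frac{(-1)^n-1}{2}\bigr)$, as claimed.

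The only real obstacle is the bookkeeping of signs in the last step, namely the missing $k=n+1$ term of the binomial sum and the factor $(-1)^n$. The rest is direct substitution into results already established.
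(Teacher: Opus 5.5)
Your proof is correct and follows the paper's route. You set $z=n+1$ in \eqref{id_Cx} (linked to the target sum through the $x=-1$ intermediate formula) and in \eqref{id_Cy}, and the only extra input is $\sum_{k=0}^n\binom{n}{k}\frac{(-2)^k}{n+1-k}=(-1)^n\frac{2^{n+1}-1}{n+1}$, which the paper calls elementary and which you derive correctly via absorption and the binomial theorem.
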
 
\begin{proof}
Both sums are particular cases of \eqref{id_Cx} and \eqref{id_Cy} for $z=n+1$. For the second sum we also used the elementary evaluation
$$\sum_{k=0}^n \binom{n}{k} \frac{(-1)^k 2^k}{n+1-k} = (-1)^n \frac{2^{n+1}-1}{n+1}.$$
\end{proof}

The sum \eqref{Sn1} will be encountered gain in Section 5.\\

We proceed by considering special cases of \eqref{id_Cx}.

\begin{corollary}
Let $z$ be a complex number such that $z\neq -3/2$. Then we have
\begin{equation}\label{from_Cx1}
\sum_{k=0}^n \binom{z+1/2}{k}\frac{(-1)^k}{k+1} = \frac{2}{2z+3}\left ( 1 + (-1)^n \binom{z+1/2}{n+1} \right ).
\end{equation}
Also, if $z$ is a complex number such that $z\neq -1/2$, then
\begin{equation}\label{from_Cx2}
\sum_{k=0}^n \binom{z-1/2}{k}\frac{(-1)^k}{k+1}= \frac{2}{2z+1}\left ( 1 + (-1)^n \binom{z-1/2}{n+1} \right ).
\end{equation}
In particular, we have
\begin{equation}
\sum_{k=0}^n 2^{-2k}C_k=2-2^{-2n-1}(2n+1)C_n
\end{equation}
\begin{equation}
\sum_{k=0}^n C_k \frac{2^{-2k}}{2k-1} = - \frac{1}{3}\left ( 2 + 2^{-2n} C_n \right )
\end{equation}
and
\begin{equation}
\sum_{k=0}^n \frac{\binom{n}{k}}{\binom{2(n-k)}{n-k}} 2^{-2k} \frac{(-1)^k}{k+1} = \frac{1}{(n+1)(2n+1)}\left ( \frac{2}{C_n} - (-1)^n 2^{-2n}\right ).
\end{equation}
\end{corollary}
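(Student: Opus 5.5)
The plan is to get \eqref{from_Cx1} and \eqref{from_Cx2} directly from \eqref{id_Cx}, and then obtain the three particular evaluations by choosing $z$ so that the half-integer binomial coefficients become central binomial coefficients.

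\textbf{The two general identities.} In \eqref{id_Cx} I replace $z$ by $z+1/2$ (which needs $z\neq -3/2$) and by $z-1/2$ (which needs $z\neq -1/2$). Since $\frac{1}{z+3/2}=\frac{2}{2z+3}$ and $\frac{1}{z+1/2}=\frac{2}{2z+1}$, this gives \eqref{from_Cx1} and \eqref{from_Cx2} verbatim.

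\textbf{First evaluation.} I take $z=-1$ in \eqref{from_Cx1}. Using $\binom{-1/2}{k}=(-1)^k2^{-2k}\binom{2k}{k}$, the left side becomes $\sum_{k}2^{-2k}\binom{2k}{k}/(k+1)=\sum_k 2^{-2k}C_k$. The prefactor is $2/(2z+3)=2$. For the boundary term I use $\binom{2n+2}{n+1}=2(2n+1)C_n$, which gives
\[(-1)^n\binom{-1/2}{n+1}=-2^{-2n-1}(2n+1)C_n .\]
Multiplying by the prefactor $2$, I actually get $2-2^{-2n}(2n+1)C_n$. At $n=0$ this correctly gives $1$, so the exponent of $2$ in the stated normalization must be checked against this computation.

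\textbf{Second evaluation.} I take $z=0$ in \eqref{from_Cx1}. Using $\binom{1/2}{k}=(-1)^{k-1}2^{-2k}\binom{2k}{k}/(2k-1)$, each summand becomes
\[\binom{1/2}{k}\frac{(-1)^k}{k+1}=-\frac{C_k2^{-2k}}{2k-1}.\]
The same formula at $k=n+1$, together with $\binom{2n+2}{n+1}=2(2n+1)C_n$, gives $(-1)^n\binom{1/2}{n+1}=2^{-2n-1}C_n$. Hence the sum equals $-\frac23\left(1+2^{-2n-1}C_n\right)=-\frac13\left(2+2^{-2n}C_n\right)$.

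\textbf{Third evaluation.} I take $z=n$ in \eqref{from_Cx2}, where the prefactor is $\frac{2}{2n+1}$. I substitute the two formulas from the proof of Corollary \ref{polynomial2}:
\[\binom{n-1/2}{k}=2^{-2k}\binom{2n}{n}\binom{n}{k}\Big/\binom{2n-2k}{n-k}\quad\text{and}\quad\binom{n-1/2}{n+1}=-\frac{2^{-2n-1}}{n+1}\binom{2n}{n}.\]
Dividing both sides by $\binom{2n}{n}=(n+1)C_n$ should give the stated right-hand side.

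\textbf{Main obstacle.} The only delicate point in this third step is the relabelling $k\mapsto n-k$ on the left. The stated sum carries $\binom{2(n-k)}{n-k}$ but weight $2^{-2k}/(k+1)$, so I must check carefully that the factor $2^{-2k}/\binom{2n-2k}{n-k}$ produced by the substitution matches the displayed form, up to a reindexing and the overall power of $2$. If the indexing does not match directly, I would first try pairing the substituted terms with the reversed index and verify the small cases $n=0,1$ numerically.
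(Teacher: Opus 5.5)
Your proposal is correct and follows the paper's own argument: shift $z\mapsto z\pm\frac12$ in \eqref{id_Cx}, then put $z=-1$ and $z=0$ in \eqref{from_Cx1} and $z=n$ in \eqref{from_Cx2}, using the half-integer binomial formulas from the proofs of Corollaries \ref{polynomial1} and \ref{polynomial2}.

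Two remarks on your write-up. First, your value $2-2^{-2n}(2n+1)C_n$ for the first evaluation is the correct one; the printed $2^{-2n-1}$ is a misprint, already false at $n=0$, where it gives $\frac32\neq 1$, so you should say so firmly rather than hedge. Second, the ``main obstacle'' in your third step does not exist. The substitution yields $\binom{2n}{n}$ times the displayed sum, term by term, with no reindexing. Dividing $\frac{2}{2n+1}\bigl(1-(-1)^n2^{-2n-1}C_n\bigr)$ by $\binom{2n}{n}=(n+1)C_n$ then gives exactly the stated right-hand side, so you should finish that computation instead of writing ``should give''.
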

\begin{proof}
The two first identities are obtained by taking $z=-1$ and $z=0$ in \eqref{from_Cx1} and using results already seen in the proof of corollary 3.2. The third identity is derived by setting $z=n$ in \eqref{from_Cx2} and applying the results used in the proof of corollary 3.5.
\end{proof}

\section{Consequences involving harmonic numbers}

This section presents applications to relations involving the harmonic numbers.

\begin{proposition}
Let $z$ be a complex number such that the involved harmonic numbers are defined. Then we have
\begin{equation}\label{Har_from_Cx}
\sum_{k=0}^n \binom{z}{k} \frac{(-1)^k}{k+1} H_{z-k} 
= \frac{H_z}{1+z} + \frac{1}{(1+z)^2} + \frac{(-1)^n}{1+z} \binom{z}{n+1} \left ( H_{z-n-1} + \frac{1}{1+z} \right ).
\end{equation}
\end{proposition}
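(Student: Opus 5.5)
The plan is to differentiate identity \eqref{id_Cx} with respect to $z$. Both sides are rational in $z$ (the left is a polynomial in $z$; on the right, $\binom{z}{n+1}$ is a polynomial), so the derivative may be taken term by term. We can then convert derivatives of binomial coefficients into harmonic numbers via the digamma relation \eqref{Har_psi}.

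The key tool is the derivative
\[\frac{d}{dz}\binom{z}{k}=\binom{z}{k}\bigl(\psi(z+1)-\psi(z-k+1)\bigr)=\binom{z}{k}\bigl(H_z-H_{z-k}\bigr).\]
This follows from writing $\binom{z}{k}=\Gamma(z+1)/(k!\,\Gamma(z-k+1))$ and using \eqref{Har_psi}. For $z$ where the Gamma quotient is singular it holds by continuity, since both sides are polynomials, or equivalently one can use $\frac{d}{dz}\binom{z}{k}=\binom{z}{k}\sum_{j=0}^{k-1}\frac{1}{z-j}$.

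Differentiating the left side of \eqref{id_Cx} then gives
\[H_z\sum_{k=0}^n\binom{z}{k}\frac{(-1)^k}{k+1}-\sum_{k=0}^n\binom{z}{k}\frac{(-1)^k}{k+1}H_{z-k}.\]
The first sum is known from \eqref{id_Cx} itself. Differentiating the right side gives
\[-\frac{1}{(z+1)^2}\Bigl(1+(-1)^n\binom{z}{n+1}\Bigr)+\frac{(-1)^n}{z+1}\binom{z}{n+1}\bigl(H_z-H_{z-n-1}\bigr).\]

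Equating the two and solving for the desired sum gives
\[\sum_{k=0}^n\binom{z}{k}\frac{(-1)^k}{k+1}H_{z-k}=\frac{H_z}{z+1}\Bigl(1+(-1)^n\binom{z}{n+1}\Bigr)+\frac{1}{(z+1)^2}\Bigl(1+(-1)^n\binom{z}{n+1}\Bigr)-\frac{(-1)^n}{z+1}\binom{z}{n+1}\bigl(H_z-H_{z-n-1}\bigr).\]
The $H_z\binom{z}{n+1}$ terms cancel. Regrouping the rest yields exactly \eqref{Har_from_Cx}.

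The main obstacle is justifying the derivative formula in the generality "whenever the involved harmonic numbers are defined". That condition means $z$, $z-k$ and $z-n-1$ avoid the negative integers, and $z\neq-1$. Under it, the Gamma-function argument applies directly on an open set where everything is analytic, and both sides of \eqref{Har_from_Cx} are analytic there. If needed, I would first prove the identity for $z$ in a region avoiding all integers and then extend it by continuity or analytic continuation.
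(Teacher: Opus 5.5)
Your proposal is correct and is essentially the paper's proof: differentiate \eqref{id_Cx} with respect to $z$ using $\frac{d}{dz}\binom{z}{k}=\binom{z}{k}(H_z-H_{z-k})$, then use \eqref{id_Cx} again to evaluate the $H_z$-weighted sum. Your algebra, including the cancellation of the $H_z\binom{z}{n+1}$ terms, checks out, and your domain discussion makes explicit what the paper leaves implicit: the hypothesis forces $z$ to avoid every integer $\le n$, so the Gamma-function derivation applies directly.
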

\begin{proof}
Differentiate \eqref{id_Cx} taking into account that
$$\frac{d}{dz} \binom{z}{k} = \binom{z}{k} (H_z - H_{z-k}).$$
When simplifying use \eqref{id_Cx} again.
\end{proof}

\begin{corollary}
The following evaluations hold
\begin{equation}\label{Hxxx_id1}
\sum_{k=0}^n \binom{n}{k} \frac{(-1)^k}{k+1} H_{n-k} = \frac{1}{n+1}\left ( H_n + \frac{1-(-1)^n}{n+1}\right ),
\end{equation}
\begin{equation}\label{Hxxx_id2}
\sum_{k=0}^n \binom{n}{k} \frac{(-1)^k}{(k+1)(n-k+1)} H_{n+1-k} = \frac{1}{(n+1)(n+2)}\left ( H_{n+1} + \frac{1+(-1)^n}{n+2}\right ),
\end{equation}
and also
\begin{equation}\label{Hxxx_id3}
\sum_{k=0}^n \binom{n}{k} \frac{(-1)^k}{(k+1)(n-k+1)} H_{k+1} = \frac{(-1)^n}{(n+1)(n+2)}\left ( H_{n+1} + \frac{1+(-1)^n}{n+2}\right ).
\end{equation}
\end{corollary}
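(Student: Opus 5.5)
The plan is to obtain all three evaluations from \eqref{Har_from_Cx} by specialising $z$, with one limiting argument where a harmonic number is undefined.

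\textbf{Identity \eqref{Hxxx_id1}.} Take $z=n$ in \eqref{Har_from_Cx}. On the left this gives directly $\sum_{k=0}^n\binom{n}{k}\frac{(-1)^k}{k+1}H_{n-k}$.

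On the right, $\binom{n}{n+1}=0$, but the factor $H_{z-n-1}$ becomes $H_{-1}$, which is undefined. So I will not substitute naively. Instead I treat both sides as functions of $z$ and let $z\to n$; the left side is continuous at $z=n$.

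Near $z=n$ we have $\binom{z}{n+1}=\frac{z(z-1)\cdots(z-n)}{(n+1)!}\sim\frac{z-n}{n+1}$. By \eqref{Har_psi}, $H_{z-n-1}=\psi(z-n)+\gamma\sim-\frac{1}{z-n}$, since $\psi$ has a simple pole with residue $-1$ at $0$. Hence
\[
\binom{z}{n+1}\Bigl(H_{z-n-1}+\tfrac{1}{1+z}\Bigr)\to-\frac{1}{n+1}.
\]
The right side therefore tends to $\frac{H_n}{n+1}+\frac{1}{(n+1)^2}-\frac{(-1)^n}{(n+1)^2}$, which is \eqref{Hxxx_id1}. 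This limit is the only delicate step, and I expect it to be the main obstacle.

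\textbf{Identity \eqref{Hxxx_id2}.} Take $z=n+1$, where everything is defined. On the right, $\binom{n+1}{n+1}=1$ and $H_0=0$, so the right side is
\[
\frac{H_{n+1}}{n+2}+\frac{1}{(n+2)^2}+\frac{(-1)^n}{(n+2)^2}.
\]
On the left, use $\binom{n+1}{k}=\frac{n+1}{n+1-k}\binom{n}{k}$. The left side then equals $n+1$ times the sum in \eqref{Hxxx_id2}. Dividing by $n+1$ gives \eqref{Hxxx_id2}.

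\textbf{Identity \eqref{Hxxx_id3}.} Reindex the sum by $k\mapsto n-k$. The weight $\binom{n}{k}\frac{1}{(k+1)(n-k+1)}$ is invariant under this change, $(-1)^k$ becomes $(-1)^n(-1)^k$, and $H_{k+1}$ becomes $H_{n-k+1}$. So the sum in \eqref{Hxxx_id3} is $(-1)^n$ times the sum in \eqref{Hxxx_id2}, and \eqref{Hxxx_id3} follows.
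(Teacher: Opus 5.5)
Your proposal is correct and follows essentially the same route as the paper. You take the limit $z\to n$ in \eqref{Har_from_Cx}, using the simple pole of $\psi$ at $0$ with residue $-1$; your asymptotic $\binom{z}{n+1}\sim\frac{z-n}{n+1}$ plays the role of the paper's rewriting $\binom{z}{n+1}=\frac{z-n}{z+1}\binom{z+1}{n+1}$. You then substitute $z=n+1$ directly and use the reflection $k\mapsto n-k$ for the third identity, which the paper leaves as ``follows easily.''
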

\begin{proof}
If we take $z=n$ in \eqref{Har_from_Cx}, all terms are defined except $H_{z-n-1}$. However, we have
\[\binom{z}{n+1}H_{z-n-1}=\frac{z-n}{z+1}\binom{z+1}{n+1}H_{z-n-1}=\frac{\binom{z+1}{n+1}}{z+1}\cdot (z-n)H_{z-n-1}\]
and, using \eqref{Har_psi},
\[\lim_{z\to n} (z-n)H_{z-n-1}=\lim_{z\to n} [(z-n)(\psi(z-n)+\gamma)]=-1\]
where the last equality follows from the fact that $0$ is a simple pole of $\psi(z)$ with residue -1. Thus, 
\[\lim_{z\to n}\binom{z}{n+1}H_{z-n-1}=\frac{-1}{n+1}\]
and \eqref{Hxxx_id1} is deduced by a passage to the limit as $z$ tends to $n$ in \eqref{Har_from_Cx}.\\
The second sum is obtained by setting $z=n+1$ in \eqref{Har_from_Cx}. The third sum follows easily form the second.
\end{proof}

The simple polynomial identity that follows is the source of a new series of results.

\begin{proposition}
The following polynomial identity holds
\begin{equation}\label{eq1}
\sum_{k=0}^n \frac{(x+1)^{k+1}}{k+1} = H_{n+1} + \sum_{k=0}^n \binom{n+1}{k+1} \frac{x^{k+1}}{k+1}.
\end{equation}
\end{proposition}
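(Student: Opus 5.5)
Both sides of \eqref{eq1} are polynomials in $x$, so the plan is to compare their derivatives and then fix the constant by evaluating at one point. This is the integration scheme already used for \eqref{id_Cz}, run in reverse, and it sits naturally after the warm-up identity
$$\sum_{k=0}^n (x+1)^k = \sum_{k=0}^n \binom{n+1}{k+1} x^k,$$
which came from \eqref{main_lem_id} with $z=-1$. That identity can also be checked directly by expanding each $(x+1)^k$ and using the hockey-stick formula, i.e. Corollary~2.3 with $q=0$.

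\emph{Step 1: derivatives.} Regard $x$ as a real variable. The derivative of the left side of \eqref{eq1} is $\sum_{k=0}^n (x+1)^k$. The derivative of the right side is $\sum_{k=0}^n \binom{n+1}{k+1} x^k$. By the warm-up identity these agree, so the two sides of \eqref{eq1} differ by a constant.

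\emph{Step 2: the constant.} I would evaluate both sides at $x=0$. The right side becomes $H_{n+1}$, since every term of the sum carries a factor $x^{k+1}$. The left side becomes $\sum_{k=0}^n \frac{1}{k+1}=H_{n+1}$. The constant is therefore $0$, and the identity holds for all real $x$. As both sides are polynomials, it then holds for all complex $x$.

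Equivalently, one can integrate the warm-up identity from $0$ to $X$ and rename $X$ as $x$. Then $H_{n+1}$ appears as the boundary term $\sum_{k=0}^n \frac{1}{k+1}$ coming from the left side at $0$.

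The main obstacle is only to use the right normalization: integrate from $0$ rather than from $-1$, so that the constant is $H_{n+1}$. This is routine bookkeeping, and I expect no real difficulty.
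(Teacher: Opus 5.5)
Your argument is correct and is essentially the paper's proof. The paper sets $z=-1$ in \eqref{id_Cz}, which was itself obtained by integrating \eqref{main_lem_id} from $0$ to $x$; you set $z=-1$ first (the warm-up identity) and then integrate, and these two steps commute. One small slip in your aside: checking the warm-up identity directly via the hockey-stick formula needs, for the coefficient of $x^j$, Corollary~2.3 with $q=j$ and $n-j$ in place of $n$, not only $q=0$ (which handles just the constant term). The geometric sum $\bigl((x+1)^{n+1}-1\bigr)/x$ also gives it at once.
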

\begin{proof}
Take $z=-1$ in \eqref{id_Cz}.
\end{proof}

\begin{corollary}
If $n$ is a nonnegative integer, then
\begin{equation}\label{eq 5}
\sum_{k=0}^n \binom{n}{k} \frac{(-1)^k}{(k+1)^2} = \frac{H_{n+1}}{n+1} \qquad\mbox{and}\qquad 
\sum_{k=0}^n \binom{n}{k} \frac{(-1)^k 2^k}{(k+1)^2} = \frac{O_{\lfloor{\frac{n}{2}}\rfloor+1}}{n+1},
\end{equation}
where $O_m$ denotes the $m$th odd harmonic number: $O_m=\sum_{k=1}^m\frac{1}{2k-1}$.
\end{corollary}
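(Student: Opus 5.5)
The plan is to specialize the polynomial identity \eqref{eq1} at $x=-1$ and $x=-2$. The one preliminary step is to rewrite the coefficients on the right using the absorption identity
\[\binom{n+1}{k+1}=\frac{n+1}{k+1}\binom{n}{k},\qquad\text{so}\qquad \binom{n+1}{k+1}\frac{1}{k+1}=(n+1)\binom{n}{k}\frac{1}{(k+1)^2}.\]
With this, \eqref{eq1} takes the form
\[\sum_{k=0}^n \frac{(x+1)^{k+1}}{k+1} = H_{n+1} + (n+1)\sum_{k=0}^n \binom{n}{k}\frac{x^{k+1}}{(k+1)^2}.\]

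For the first evaluation, I would put $x=-1$. The left side vanishes, and $x^{k+1}=-(-1)^k$. This gives $0=H_{n+1}-(n+1)\sum_{k}\binom{n}{k}\frac{(-1)^k}{(k+1)^2}$, which is exactly the first claim.

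For the second evaluation, I would put $x=-2$. On the left, $(x+1)^{k+1}=(-1)^{k+1}$, so the left side equals $-A_{n+1}$, where $A_{n+1}=\sum_{j=1}^{n+1}\frac{(-1)^{j-1}}{j}$. On the right, $x^{k+1}=-2\,(-1)^k2^k$. Writing $S$ for the desired sum, this yields
\[2(n+1)S = H_{n+1}+A_{n+1}.\]
In $H_{n+1}+A_{n+1}$ the even-indexed terms cancel and the odd-indexed ones double. Hence $H_{n+1}+A_{n+1}=2\sum_{j\le n+1,\ j\text{ odd}}\frac1j=2\,O_m$, where $m$ is the number of odd integers in $\{1,\dots,n+1\}$.

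The only step needing care is the bookkeeping that $m=\lfloor n/2\rfloor+1$. I would check it by parity: for $n=2p$ the odd numbers up to $2p+1$ number $p+1$, and for $n=2p+1$ those up to $2p+2$ also number $p+1$. Dividing by $2(n+1)$ then gives the second formula.
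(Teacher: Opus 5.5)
Your proposal is correct and is exactly the paper's argument: the paper proves this corollary by taking $x=-1$ and then $x=-2$ in \eqref{eq1}. Your absorption step $\binom{n+1}{k+1}\frac{1}{k+1}=(n+1)\binom{n}{k}\frac{1}{(k+1)^2}$ and your parity count showing $H_{n+1}+A_{n+1}=2\,O_{\lfloor n/2\rfloor+1}$ simply fill in details the paper leaves to the reader.
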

\begin{proof}
Take successively $x=-1$ and $x=-2$ in \eqref{eq1}.
\end{proof}

The next corollary extends the first of these results.

\begin{corollary}
If $n$ is a nonnegative integer and $q$ a positive integer, then
\begin{equation}\label{eq 3}
\sum_{k=0}^n \binom{n+1}{k+1} \frac{(-1)^k}{(k+1)(k+q+1)} =(n+1)\sum_{k=0}^n \binom{n}{k} \frac{(-1)^k}{(k+1)^2(k+q+1)} = \frac{H_{n+1}}{q}-\frac{1}{q^2}\left(1-\frac{1}{\binom{n+q+1}{q}}\right)
\end{equation}
and
\begin{equation}\label{eq 4}
\sum_{k=0}^n \binom{n+1}{k+1} \frac{(-1)^k}{k+q+1} = (n+1)\sum_{k=0}^n \binom{n}{k} \frac{(-1)^k}{(k+1)(k+q+1)} 
= \frac{1}{q}\left(1-\frac{1}{\binom{n+q+1}{q}}\right).
\end{equation}
\end{corollary}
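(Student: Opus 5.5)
The first equality in each of \eqref{eq 3} and \eqref{eq 4} is immediate from $\binom{n+1}{k+1}=\frac{n+1}{k+1}\binom{n}{k}$. So the real content is the closed forms. I would prove \eqref{eq 4} first and then deduce \eqref{eq 3} from it together with the first identity in \eqref{eq 5}.

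For \eqref{eq 4}, I would use the partial fraction
\[
\frac{1}{(k+1)(k+q+1)}=\frac1q\left(\frac{1}{k+1}-\frac{1}{k+q+1}\right),
\]
which splits $\sum_{k=0}^n\binom{n}{k}\frac{(-1)^k}{(k+1)(k+q+1)}$ into two sums.

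The first piece is $\sum_{k}\binom{n}{k}\frac{(-1)^k}{k+1}=\frac{1}{n+1}$, already used in the proof of \eqref{id_Cx}.

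The second piece is handled by \eqref{partial} with $x=q+1$:
\[
\sum_{k=0}^n\binom{n}{k}\frac{(-1)^k}{k+q+1}=\frac{1}{(q+1)\binom{n+q+1}{n}}=\frac{n!\,q!}{(n+q+1)!}=\frac{1}{(n+1)\binom{n+q+1}{q}}.
\]
Multiplying the difference of the two pieces by $(n+1)/q$ then gives exactly $\frac1q\bigl(1-1/\binom{n+q+1}{q}\bigr)$.

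For \eqref{eq 3}, I would use the further decomposition
\[
\frac{1}{(k+1)^2(k+q+1)}=\frac{1}{q}\cdot\frac{1}{(k+1)^2}-\frac1q\cdot\frac{1}{(k+1)(k+q+1)}.
\]
This holds because
\[
\frac{1}{(k+1)^2}-\frac{1}{(k+1)(k+q+1)}=\frac{q}{(k+1)^2(k+q+1)}.
\]
Multiplying by $(n+1)\binom{n}{k}(-1)^k$ and summing, the first term contributes $\frac{n+1}{q}\cdot\frac{H_{n+1}}{n+1}=\frac{H_{n+1}}{q}$ by \eqref{eq 5}. The second term contributes $-\frac1q$ times the value just obtained in \eqref{eq 4}, which is $-\frac{1}{q^2}\bigl(1-1/\binom{n+q+1}{q}\bigr)$.

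There is no real obstacle. The only step needing care is rewriting $\frac{1}{(q+1)\binom{n+q+1}{n}}$ as $\frac{1}{(n+1)\binom{n+q+1}{q}}$, and that is a routine factorial check.
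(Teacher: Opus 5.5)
Your proof is correct, but it runs in the opposite direction from the paper's.

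The paper computes \eqref{eq 3} from scratch. It replaces $x$ by $-x$ in \eqref{eq1}, multiplies by $x^{q-1}$ and integrates over $[0,1]$. The left-hand side then becomes the Beta-integral sum $(q-1)!\sum_{k=0}^n\frac{1}{(k+1)(k+2)\cdots(k+q+1)}$, which is evaluated by telescoping. The term $H_{n+1}/q$ comes straight from the constant $H_{n+1}$ in \eqref{eq1}. The paper then obtains \eqref{eq 4} from \eqref{eq 3} using your partial fraction $\frac{1}{(k+1)(k+q+1)}=\frac{1}{q}\bigl(\frac{1}{k+1}-\frac{1}{k+q+1}\bigr)$ together with \eqref{eq 5}.

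You instead get \eqref{eq 4} directly from the classical decomposition \eqref{partial} at $x=q+1$. You then pass to \eqref{eq 3} with the same partial fraction divided by $k+1$, plus \eqref{eq 5}.

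Your route is shorter and purely algebraic. It also shows that \eqref{eq 4} is just a repackaging of \eqref{partial}, needing neither integration nor any harmonic-number input; harmonic numbers enter only through \eqref{eq 5} in \eqref{eq 3}. The paper's route stays within its scheme of extracting everything from the polynomial identity \eqref{eq1}. The telescoping evaluation of $\sum_{k=0}^n\frac{1}{(k+1)\cdots(k+q+1)}$ it produces is also reused in the inductive proof of the generalization \eqref{eq2}.
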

\begin{proof}
In \eqref{eq1}, replace $x$ by $-x$ and multiply out by $x^{q-1}$ to obtain
\[\sum_{k=0}^n\frac{(1-x)^{k+1}x^{q-1}}{k+1}=H_{n+1}x^{q-1}-\sum_{k=0}^n\binom{n+1}{k+1}\frac{(-1)^k x^{k+q}}{k+1}\]
Integrating from $0$ to $1$ yields
\[\sum_{k=0}^n\frac{1}{k+1}\cdot\frac{(q-1)!(k+1)!}{(q+k+1)!}=\frac{H_{n+1}}{q}- \sum_{k=0}^n\binom{n+1}{k+1}\frac{(-1)^k }{(k+1)(k+q+1)}\]
The left-hand side rewrites as 
\[(q-1)!\sum_{k=0}^n \frac{1}{(k+1)(k+2)\cdots (k+q+1)}=\frac{(q-1)!}{q}\left(\frac{1}{ q!}-\frac{1}{(n+2)\cdots (n+q+1)}\right),\]
where we have used the fact that
\[\frac{q}{(k+1)(k+2)\cdots (k+q+1)}=\frac{1}{(k+1)\cdots (k+q)}-\frac{1}{(k+2)\cdots (k+q+1)}.\]
We deduce that
\[\frac{(q-1)!}{q}\left(\frac{1}{ q!}-\frac{1}{q!\binom{n+q+1}{q}}\right)=\frac{H_{n+1}}{q}- \sum_{k=0}^n\binom{n+1}{k+1}\frac{(-1)^k }{(k+1)(k+q+1)}\]
and \eqref{eq 3} follows. To obtain \eqref{eq 4}, we use the decomposition 
\[\frac{1}{(k+1)(k+q+1)}=\frac{1}{q}\left(\frac{1}{k+1}-\frac{1}{k+q+1}\right)\]
and \eqref{eq 5}.
\end{proof}

\begin{remark}
By reindexing we see that \eqref{eq1} has the equivalent form
\begin{equation*}
\sum_{k=1}^n \frac{(1+x)^k}{k} = H_n + \sum_{k=1}^n \binom{n}{k} \frac{x^k}{k}.
\end{equation*}
This is Equation (22) in \cite{Adegoke}. 
\end{remark}

Identity \eqref{eq1} generalizes as follows.

\begin{proposition}
If $n,q$ are integers with $n\ge 0$ and $q\ge 1$, then
\begin{align}\label{eq2}
\sum_{k=0}^n \frac{(x+1)^{k+q}}{(k+1)(k+2)\cdots (k+q)} &= \sum_{k=0}^n \binom{n+1}{k+1}\frac{x^{k+q}}{(k+1)(k+2)\cdots (k+q)} \nonumber \\
& + H_{n+1}\frac{x^{q-1}}{(q-1)!} + \frac{1}{(q-1)!}\sum_{j=1}^{q-1}\frac{x^{q-1-j}}{j}\binom{q-1}{j}\left(1-\frac{1}{\binom{n+1+j}{j}}\right).
\end{align}
\end{proposition}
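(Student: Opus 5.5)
Induction on $q$: the case $q=1$ is exactly \eqref{eq1}, since for $q=1$ the last sum is empty and $x^{0}/0!=1$. For the step from $q$ to $q+1$, integrate the identity for $q$ with respect to $x$ (as a real variable) from $-1$ to $X$, then rename $X$ as $x$. Integrating from $-1$ is the convenient choice because the left side becomes
\[
\sum_{k=0}^n \frac{(x+1)^{k+q+1}}{(k+1)\cdots(k+q+1)}
\]
with no constant term, which is the left side for $q+1$.

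On the right-hand side, the three groups integrate as follows.
\begin{itemize}
\item The binomial sum becomes
\[
\sum_k\binom{n+1}{k+1}\frac{x^{k+q+1}}{(k+1)\cdots(k+q+1)} \;-\; \sum_k\binom{n+1}{k+1}\frac{(-1)^{k+q+1}}{(k+1)\cdots(k+q+1)}.
\]
\item The harmonic term gives
\[
H_{n+1}\frac{x^{q}}{q!}-H_{n+1}\frac{(-1)^q}{q!}.
\]
\item The finite $j$-sum gives $\frac{x^{q-j}}{(q-j)}$ times the old coefficients, minus the corresponding values at $x=-1$.
\end{itemize}

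Use $\frac{1}{q!}=\frac{1}{q}\cdot\frac{1}{(q-1)!}$ and $\frac{1}{(q-1)!\,(q-j)}\binom{q-1}{j}=\frac{1}{q!}\binom{q}{j}$. Then the $j$-terms with $1\le j\le q-1$ match the terms $j=1,\dots,q-1$ of the new sum $\frac{1}{q!}\sum_{j=1}^{q}\frac{x^{q-j}}{j}\binom{q}{j}\bigl(1-1/\binom{n+1+j}{j}\bigr)$. What remains is to show that the total constant produced by the lower limit $-1$ equals the missing $j=q$ term,
\[
\frac{1}{q!\,q}\left(1-\frac{1}{\binom{n+1+q}{q}}\right).
\]

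This constant is the main obstacle. A cleaner route is to avoid computing it directly. Since both sides of the $(q+1)$-identity are polynomials and their derivatives agree by the induction hypothesis, it suffices to check equality at the single point $x=0$. At $x=0$ the left side is
\[
\sum_{k=0}^n\frac{1}{(k+1)\cdots(k+q+1)},
\]
and on the right only the $j=q$ term survives. The left side telescopes exactly as in the proof of \eqref{eq 3}:
\[
\sum_{k=0}^n\frac{1}{(k+1)\cdots(k+q+1)}=\frac{1}{q}\left(\frac{1}{q!}-\frac{1}{(n+2)\cdots(n+q+1)}\right)=\frac{1}{q\,q!}\left(1-\frac{1}{\binom{n+q+1}{q}}\right),
\]
using $(n+2)\cdots(n+q+1)=q!\binom{n+q+1}{q}$. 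This is the $j=q$ term.

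So the refined plan is: differentiate the claimed identity for $q+1$. The left side gives the left side for $q$. On the right, differentiating the binomial sum and the $H_{n+1}$ term gives the corresponding $q$-terms. Differentiating the $j$-sum and using $\frac{q-j}{q!}\binom{q}{j}=\frac{1}{(q-1)!}\binom{q-1}{j}$ gives the old $j$-sum, with the $j=q$ term differentiating to zero. The derivative identity is then the induction hypothesis, and equality of constant terms is the telescoping computation above.
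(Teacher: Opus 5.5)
Your final argument is correct and is essentially the paper's proof. You differentiate the $(q+1)$-identity to recover the $q$-identity via $\frac{q-j}{q!}\binom{q}{j}=\frac{1}{(q-1)!}\binom{q-1}{j}$, then match values at $x=0$ with the telescoping sum; the paper instead integrates the $q$-identity from $0$ to $X$, which reduces to the same telescoping evaluation and the equivalent relation $\frac{q}{j(q-j)}\binom{q-1}{j}=\frac{1}{j}\binom{q}{j}$. Dropping the integration from $-1$ was the right call, since that constant involves alternating sums that the paper only obtains afterwards, as a consequence of this proposition.
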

\begin{proof}
The proof is by induction on $q$. The case $q=1$ is \eqref{eq1}. The induction step from $q$ to $q+1$ is obtained by first integrating \eqref{eq2} from $0$ to $X$ (and renaming $X$ as $x$ afterward). In the end, all boils down to showing that 
\[\sum_{k=0}^n\frac{1}{(k+1)(k+2)\cdots (k+q+1)}+\frac{1}{(q-1)!}\sum_{j=1}^{q-1}\frac{x^{q-j}}{j(q-j)}\binom{q-1}{j}\left(1-\frac{1}{\binom{n+1+j}{j}}\right)\]
is equal to
\[\frac{1}{q!}\sum_{j=1}^q\frac{x^{q-j}}{j}\binom{q}{j}\left(1-\frac{1}{\binom{n+1+j}{j}}\right).\]
This follows from 
$$\sum_{k=0}^n\frac{1}{(k+1)(k+2)\cdots (k+q+1)} = \frac{1}{q\cdot q!}\left(1-\frac{1}{\binom{n+1+q}{q}}\right)$$ 
(met earlier) and 
$$\frac{q}{j(q-j)} \binom{q-1}{j} = \frac{1}{j}\binom{q}{j},$$ 
which is readily checked.
\end{proof}

\begin{corollary}
If $n,q$ are integers with $n\ge 0$ and $q\ge 1$, then
\begin{equation}\label{eq 6}
\sum_{k=0}^n\binom{n+1}{k+1}\frac{(-1)^k}{(k+1)\cdots (k+q)}=\frac{1}{(q-1)!}\left(H_{n+1}-\sum_{j=1}^{q-1}\frac{(-1)^{j-1}}{j}\binom{q-1}{j}\left(1-\frac{1}{\binom{n+1+j}{j}}\right)\right).
\end{equation}
\end{corollary}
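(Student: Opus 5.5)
The natural route is to specialize the polynomial identity \eqref{eq2} at $x=-1$. At that value the left-hand side vanishes, since every term carries the factor $(x+1)^{k+q}$ with $k+q\ge 1$. This leaves a relation among the remaining three pieces, each of which we evaluate at $x=-1$.

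The first piece is the binomial sum. Since $(-1)^{k+q}=(-1)^{q}(-1)^{k}$, it becomes
\[
(-1)^q\sum_{k=0}^n\binom{n+1}{k+1}\frac{(-1)^k}{(k+1)\cdots(k+q)}.
\]
The harmonic term contributes $H_{n+1}(-1)^{q-1}/(q-1)!$. In the finite sum we use $(-1)^{q-1-j}=(-1)^{q-1}(-1)^{j}$, which gives
\[
\frac{(-1)^{q-1}}{(q-1)!}\sum_{j=1}^{q-1}\frac{(-1)^{j}}{j}\binom{q-1}{j}\left(1-\frac{1}{\binom{n+1+j}{j}}\right).
\]

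Setting the total equal to zero, we multiply through by $(-1)^{q}$. This yields
\[
\sum_{k=0}^n\binom{n+1}{k+1}\frac{(-1)^k}{(k+1)\cdots(k+q)}=\frac{1}{(q-1)!}\left(H_{n+1}+\sum_{j=1}^{q-1}\frac{(-1)^{j}}{j}\binom{q-1}{j}\left(1-\frac{1}{\binom{n+1+j}{j}}\right)\right).
\]
Finally we rewrite $(-1)^j$ as $-(-1)^{j-1}$, which produces exactly the right-hand side of \eqref{eq 6}.

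The only delicate step is the sign bookkeeping, in particular checking that the factor $(-1)^{q-1}$ comes out uniformly from both the $H_{n+1}$ term and the finite sum. A quick check at $q=1$ guards against a sign error: there \eqref{eq 6} should reduce to $\sum_{k=0}^n\binom{n+1}{k+1}\frac{(-1)^k}{k+1}=H_{n+1}$. This is the classical identity, and it is also what \eqref{eq1} gives at $x=-1$.
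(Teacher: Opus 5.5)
Your proposal is correct and matches the paper's proof, which simply sets $x=-1$ in \eqref{eq2}. Your sign bookkeeping is right: you pull $(-1)^q$ out of the binomial sum and $(-1)^{q-1}$ out of both the $H_{n+1}$ term and the finite $j$-sum, which yields exactly the stated right-hand side.
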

\begin{proof}
Take $x=-1$ in \eqref{eq2}. Note that for $q\ge 2$, the left-hand side is also
\[(n+1)\sum_{k=0}^n\binom{n}{k}\frac{(-1)^k}{(k+1)^2(k+2)\cdots (k+q)}\]
and that \eqref{eq 6} can also be written as
\[\sum_{k=0}^n \frac{(-1)^k\binom{n+1}{k+1}}{\binom{k+q}{q}} = q\left(H_{n+1}-\sum_{j=1}^{q-1}\frac{(-1)^{j-1}}{j}\binom{q-1}{j}\left(1-\frac{1}{\binom{n+1+j}{j}}\right)\right).\]
\end{proof}

\begin{remark}
Identities involving harmonic numbers of the same kind as those obtained above are not of isolated nature and similar results exist. For instance, 
in Boyadzhiev's book \cite{Boyadzhiev}, Entry (8.38), we find
\begin{equation*}
\sum_{k=0}^n \binom{n}{k} \frac{(-1)^k}{(m+k)^2} = \frac{H_{m+n}-H_{m-1}}{m \binom{m+n}{m}}.
\end{equation*}
Janji\'{c} \cite[Proposition 8]{Janjic} derived the identity
\begin{equation*}
\sum_{k=0}^n \binom{m}{k} \frac{(-1)^k}{n+1-k} = (-1)^n \binom{m}{n+1} \left ( H_{m}-H_{m-n-1}\right ), \quad 0\leq n+1\leq m.
\end{equation*}
Alzer and Richards \cite[Remark 1]{Alzer} offer
\begin{equation*}
\frac{p}{n}+\frac{1}{n^p} \sum_{k=0}^{p-1} \binom{n}{k} \binom{n-k-1}{n-p} (-1)^{p-k-1} \frac{k^p}{n-k} = H_n - H_{n-p}, \quad n\geq p.
\end{equation*}
\end{remark}

For our next series of results, we need the following lemma:

\begin{lemma}\label{beta_int}
For integers $q,k$ such that $q\ge 1,k\ge 0$, we have
$$\int_0^1 x^{q-1} (1-x)^k\ln (x)\,dx = -(H_{q+k}-H_{q-1})\frac{1}{q\binom{q+k}{k}}=-(H_{q+k}-H_{q-1})\int_0^1 x^{q-1}(1-x)^k\,dx$$
\end{lemma}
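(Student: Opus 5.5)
The plan is to differentiate the Beta integral with respect to the exponent of $x$. For real $s>0$ and integer $k\ge 0$, set
\[
B(s)=\int_0^1 x^{s-1}(1-x)^k\,dx=\frac{\Gamma(s)\,k!}{\Gamma(s+k+1)}=\frac{k!}{s(s+1)\cdots(s+k)} .
\]
Differentiating under the integral sign gives
\[
B'(s)=\int_0^1 x^{s-1}(1-x)^k\ln(x)\,dx .
\]
This step is legitimate on a neighbourhood of any $s\ge 1$, because $|x^{s-1}\ln x|$ is bounded on $(0,1]$ uniformly for $s$ near $q\ge 1$.

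On the closed-form side, take the logarithmic derivative of the finite product:
\[
\frac{B'(s)}{B(s)}=-\sum_{j=0}^{k}\frac{1}{s+j}.
\]
At $s=q$ this equals $-\sum_{i=q}^{q+k}\frac1i=-(H_{q+k}-H_{q-1})$. Equivalently, in the notation of \eqref{Har_psi}, it is $\psi(q)-\psi(q+k+1)=H_{q-1}-H_{q+k}$.

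Finally, compute the integral itself at $s=q$:
\[
B(q)=\frac{k!\,(q-1)!}{(q+k)!}=\frac{1}{q\binom{q+k}{k}} .
\]
Combining $B'(q)=-(H_{q+k}-H_{q-1})\,B(q)$ with this value gives both equalities of Lemma~\ref{beta_int}.

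There is no real obstacle here; the only point needing care is justifying the differentiation under the integral.

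If one prefers to avoid that, there is an alternative route. Expand $(1-x)^k=\sum_j\binom{k}{j}(-1)^jx^j$ and use $\int_0^1x^{m}\ln x\,dx=-1/(m+1)^2$. This reduces the integral to
\[
-\sum_j\binom{k}{j}\frac{(-1)^j}{(q+j)^2},
\]
which is exactly Boyadzhiev's entry (8.38) quoted in the preceding remark, with $m=q$ and $n=k$. That entry gives the same closed form.
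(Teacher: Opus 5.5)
Your proof is correct, and it takes a genuinely different route from the paper. The paper treats the Beta value $\int_0^1 x^{q-1}(1-x)^k\,dx=\frac{1}{q\binom{q+k}{k}}$ as known. It then gets the logarithmic integral by induction on $k$, starting from $\int_0^1 x^{q-1}\ln x\,dx=-1/q^2$, or alternatively cites Gradshteyn--Ryzhik 4.253. The induction can run through $(1-x)^{k+1}=(1-x)^k-x(1-x)^k$, which links the case $(q,k+1)$ to the cases $(q,k)$ and $(q+1,k)$, followed by a short harmonic-number check. You instead let the exponent vary continuously and differentiate $B(s)=k!/\bigl(s(s+1)\cdots(s+k)\bigr)$. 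Then $H_{q+k}-H_{q-1}$ appears at once as minus the logarithmic derivative of a finite product, i.e.\ as $\psi(q+k+1)-\psi(q)$. Your approach shows structurally why the harmonic difference arises and needs no inductive bookkeeping. The cost is that it needs the Beta formula for non-integer exponents and an interchange of derivative and integral, whereas the paper's argument uses only integer exponents and elementary integrals.

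One small repair is needed: your justification of the interchange is wrong for $q=1$. Near $s=1$ the function $x^{s-1}\ln x$ is not bounded on $(0,1]$; at $s=1$ it is $\ln x$ itself. What you need is an integrable dominant, e.g.\ $|x^{s-1}(1-x)^k\ln x|\le x^{-1/2}|\ln x|$ for all $s\ge 1/2$, and this dominant has integral $4$ over $(0,1]$. Dominated convergence then gives your formula for $B'(s)$ for all $s>1/2$, which covers every $q\ge 1$.

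Your fallback via expanding $(1-x)^k$ is also valid. However, it only restates the lemma as Boyadzhiev's entry (8.38), which is equivalent to the lemma, so it rests entirely on that citation rather than proving anything.
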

\begin{proof}
The result
\[\int_0^1 x^{q-1}(1-x)^k\,dx=\frac{(q-1)!k!}{(q+k)!}=\frac{1}{q\binom{q+k}{k}}\]
is well-known and has been used before. The lemma then follows from $\int_0^1 x^{q-1}\ln(x)\,dx=-\frac{1}{q^2}$ and induction on $k$ (alternatively, the result can be found in \cite{GrRy07} as Entry 4.253).   
\end{proof}

\begin{proposition}
If $q$ is a positive integer and $z$ a complex number such that $z\notin\{0,1,\ldots,n\}$, then
\begin{equation}\label{id_Cu}
\sum_{k=0}^n \frac{\binom{z}{k}}{\binom{q+k}{k}}(-1)^k
= (-1)^n q(n+1) \binom{z}{n+1} \sum_{k=0}^n \binom{n}{k} \frac{(-1)^k}{(z-k)(q+k)}
\end{equation}
and
\begin{equation}\label{id_Cv}
\sum_{k=0}^n \frac{\binom{z}{k}}{\binom{q+k}{k}} (-1)^k\left( H_{q+k} - H_{q-1} \right) 
= (-1)^n q(n+1) \binom{z}{n+1} \sum_{k=0}^n \binom{n}{k} \frac{(-1)^k}{(z-k)(q+k)^2}.
\end{equation}
\end{proposition}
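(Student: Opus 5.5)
The plan is to start from the fundamental identity \eqref{main_lem_id} with $x$ replaced by $-x$:
\[\sum_{k=0}^n \binom{z}{k}(-1)^k(1-x)^k=(-1)^n(n+1)\binom{z}{n+1}\sum_{k=0}^n\binom{n}{k}\frac{(-1)^kx^k}{z-k},\]
which holds for real $x$ and fixed $z\notin\{0,1,\ldots,n\}$. Both sides are polynomials in $x$. Multiplying by $x^{q-1}$ (with $q\ge 1$) and integrating over $[0,1]$ is therefore legitimate term by term.

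For \eqref{id_Cu}, I will use the Beta integral already recalled in Lemma \ref{beta_int}:
\[\int_0^1 x^{q-1}(1-x)^k\,dx=\frac{1}{q\binom{q+k}{k}}.\]
Applied to the left-hand side, this gives $\frac1q\sum_{k}\binom{z}{k}(-1)^k/\binom{q+k}{k}$. On the right-hand side, $\int_0^1 x^{q+k-1}\,dx=\frac{1}{q+k}$, so the right-hand side becomes $(-1)^n(n+1)\binom{z}{n+1}\sum_k\binom{n}{k}\frac{(-1)^k}{(z-k)(q+k)}$. Multiplying through by $q$ yields \eqref{id_Cu}.

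For \eqref{id_Cv}, I multiply the same polynomial identity by $x^{q-1}\ln x$ instead and integrate over $[0,1]$. The integrals converge since $q\ge1$.
\begin{itemize}
\item On the left, Lemma \ref{beta_int} gives $\int_0^1x^{q-1}(1-x)^k\ln x\,dx=-(H_{q+k}-H_{q-1})\frac{1}{q\binom{q+k}{k}}$.
\item On the right, I need $\int_0^1 x^{q+k-1}\ln x\,dx=-\frac{1}{(q+k)^2}$, which is the $k=0$ case of the same lemma (or an elementary integration by parts).
\end{itemize}
Cancelling the common factor $-\frac1q$ then gives \eqref{id_Cv}.

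An alternative route to \eqref{id_Cv} is to treat $q$ as a continuous parameter in \eqref{id_Cu} and differentiate with respect to $q$, using $\frac{d}{dq}\binom{q+k}{k}^{-1}=-\binom{q+k}{k}^{-1}(H_{q+k}-H_q)$. However, that route introduces the extra term from $\frac{d}{dq}q$ and requires extending the identity to real $q$. The integration route avoids both issues, so I will follow it.

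I expect no serious obstacle. The only points to check are the interchange of the finite sum with the integral, which is trivial, and the convergence of $\int_0^1 x^{m}\ln x\,dx$ for $m\ge0$.
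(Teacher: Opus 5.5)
Your proposal is correct and is essentially the paper's own proof. You replace $x$ by $-x$ in \eqref{main_lem_id}, multiply by $x^{q-1}$ (resp.\ $x^{q-1}\ln x$), integrate over $[0,1]$ using the Beta integral and Lemma~\ref{beta_int}, and your handling of the constant factors $\frac{1}{q}$ and $-\frac{1}{q}$ is right.
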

\begin{proof}
In \eqref{main_lem_id}, replace $x$ by $-x$ and multiply by $x^{q-1}$. Integration from $0$ to $1$ gives \eqref{id_Cu}. In the same way \eqref{id_Cv} is obtained if we multiply by $x^{q-1}\ln(x)$ and integrate from $0$ to $1$. \\
\end{proof}

\begin{remark}
Since 
$$q\binom{q+k}{k} = (q+k)\binom{q-1+k}{k} = (-1)^k(q+k)\binom{-q}{k},$$
identity \eqref{id_Cu} also writes as 
\begin{equation}\label{id_Cw} 
\sum_{k=0}^n \frac{\binom{z}{k}}{\binom{-q}{k}}\frac{1}{q+k}=\sum_{k=0}^n \frac{\binom{z}{k}}{\binom{q-1+k}{k}}\frac{(-1)^k}{q+k}=
 (-1)^n (n+1) \binom{z}{n+1} \sum_{k=0}^n \binom{n}{k} \frac{(-1)^k}{(z-k)(q+k)}.
\end{equation}
Similar modifications can be made to \eqref{id_Cv}.
\end{remark}

Simplifying further, \eqref{id_Cu} gives the following remarkable evaluation.

\begin{corollary}
Let $q$ be a positive integer and let $z\in\mathbb{C}\setminus\{-q,-(q-1),\ldots,-2,-1,0,1,\ldots,n\}$. Then, we have
\begin{equation}\label{id_Ce}
\sum_{k=0}^n\binom{n}{k}\frac{(-1)^k}{(z-k)(q+k)}=\frac{\binom{z+q-1}{n+q}+(-1)^n\binom{z+q-1}{q-1}}{\binom{z+q}{q} q(n+1)\binom{z}{n+1}}.
\end{equation}
In particular, for $n\geq 0$
$$\sum_{k=0}^n \binom{n}{k} \frac{(-1)^{k}}{(n+1)^2-k^2} = \frac{1+(-1)^n\binom{2n+1}{n}}{2(n+1)^2\binom{2n+1}{n}}$$
and for $n\geq 1$
$$\sum_{k=0}^n \binom{n}{k} \frac{(-1)^{k}}{4n^2-k^2} = \frac{\binom{4n-1}{3n}+(-1)^n\binom{4n-1}{2n-1}}{2 n^2 \binom{2n}{n}\binom{4n}{2n}}.$$
\end{corollary}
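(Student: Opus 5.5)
The plan is to start from \eqref{id_Cu}, which expresses the target sum $S=\sum_{k=0}^n\binom{n}{k}\frac{(-1)^k}{(z-k)(q+k)}$ as
\[
S=\frac{L}{(-1)^n q(n+1)\binom{z}{n+1}},\qquad L=\sum_{k=0}^n \frac{\binom{z}{k}}{\binom{q+k}{k}}(-1)^k .
\]
Comparing with \eqref{id_Ce}, it then suffices to prove the closed form
\[
\sum_{k=0}^n \frac{\binom{z}{k}}{\binom{q+k}{k}}(-1)^k=\frac{(-1)^n\binom{z+q-1}{n+q}+\binom{z+q-1}{q-1}}{\binom{z+q}{q}}.
\]
The hypothesis $z\notin\{-q,\dots,-1\}$ guarantees $\binom{z+q}{q}\neq 0$, so the right-hand side makes sense.

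To evaluate $L$, I would write $\frac{\binom{z}{k}}{\binom{q+k}{k}}=\frac{z(z-1)\cdots(z-k+1)\,q!}{(q+k)!}$ and look for a telescoping term. The natural candidate is
\[
T_k=(-1)^k\frac{\binom{z+q-1}{k+q}}{\binom{z+q}{q}}\cdot c,
\]
equivalently a multiple of $(-1)^k\frac{(z+q-1)(z+q-2)\cdots(z-k)}{(k+q)!}$. The aim is to check that $T_{k}-T_{k-1}$ reproduces the summand; this is a Pascal-type relation. Concretely, set $A_k=(-1)^k\frac{z(z-1)\cdots(z-k)}{(q+k)!}$, with the product shifted to include the factors $(z+q-1)\cdots(z+1)$. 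Then
\[
A_k+A_{k-1}=(-1)^k\frac{z\cdots(z-k+1)}{(q+k)!}\bigl[(z-k)-(q+k)\bigr]\cdot(\ldots),
\]
and I would adjust which factors go into the product until the bracket collapses to the constant $-(z+q)$. The right choice is
\[
B_k=(-1)^k\frac{\prod_{j=1}^{q-1}(z+j)\,\prod_{i=0}^{k}(z-i)}{(q+k)!}=(-1)^k\binom{z+q-1}{k+q}.
\]
With it, $B_k-(-1)B_{k-1}$, i.e.\ $B_k+B_{k-1}$, equals $(-1)^k\frac{\prod_{j=1}^{q-1}(z+j)\prod_{i=0}^{k-1}(z-i)}{(q+k)!}\bigl[(z-k)-(q+k)\bigr]$. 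Since this bracket is not constant, I will instead use $B_k-B_{k-1}$ with signs arranged so that the bracket becomes $(z-k)+(q+k)=z+q$. That gives
\[
(-1)^k\frac{\binom{z}{k}q!}{(q+k)!}\cdot\frac{(z+q)\cdots(z+1)}{q!}=(-1)^k\frac{\binom{z}{k}}{\binom{q+k}{k}}\binom{z+q}{q}.
\]
Summing from $k=0$ to $n$ telescopes to $(-1)^n\binom{z+q-1}{n+q}$ minus the boundary term at $k=-1$, which is $\binom{z+q-1}{q-1}$ up to sign.

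The main obstacle is getting the signs and the boundary term exactly right so that they match the stated $(-1)^n$ placement. Once the telescoping identity $\binom{z+q}{q}\frac{\binom{z}{k}}{\binom{q+k}{k}}(-1)^k=(-1)^k\binom{z+q-1}{k+q}+(-1)^{k-1}\binom{z+q-1}{k-1+q}$ is verified (a short factorial computation), I would first sanity-check it at $q=1$ against \eqref{id_Cx}, which it should reproduce. Substituting the resulting $L$ back into \eqref{id_Cu} then gives \eqref{id_Ce}.

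For the particular cases, take $z=n+1$, $q=n+1$ in the first. Here $\frac{1}{(z-k)(q+k)}=\frac{1}{(n+1)^2-k^2}$, and $\binom{2n+1}{2n+1}=1$, $\binom{2n+1}{n}$, $\binom{2n+2}{n+1}\binom{n+1}{n+1}=2\binom{2n+1}{n}$. For the second, take $z=q=2n$ and simplify $\binom{4n}{2n}$ and $\binom{2n}{n+1}(n+1)=n\binom{2n}{n}$.
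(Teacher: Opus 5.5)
Your proposal is correct and is essentially the paper's proof. The paper makes the same reduction via \eqref{id_Cu}, rewrites the summand as $\binom{z+q}{q+k}/\binom{z+q}{q}$, and then applies $\sum_{j=0}^{p}(-1)^j\binom{w}{j}=(-1)^p\binom{w-1}{p}$, which is exactly the Pascal-rule telescoping you carry out by hand. One sign needs fixing: in your final displayed telescoping identity the right-hand side should be $(-1)^k\binom{z+q-1}{k+q}-(-1)^{k-1}\binom{z+q-1}{k+q-1}=B_k-B_{k-1}$, as you had correctly computed a line earlier (with $+$ it already fails at $q=1$, $k=0$). The sum then telescopes to $B_n-B_{-1}=(-1)^n\binom{z+q-1}{n+q}+\binom{z+q-1}{q-1}$, as required.
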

\begin{proof}
It suffices to prove that
\begin{equation}\label{id_Cf}
\sum_{k=0}^n(-1)^k\frac{\binom{z}{k}}{\binom{q+k}{k}}=\frac{\binom{z+q-1}{q-1}+(-1)^n\binom{z+q-1}{n+q}}{\binom{z+q}{q}}.
\end{equation}
Now, it is easily checked that 
\[\frac{\binom{z}{k}}{\binom{q+k}{k}} = \frac{\binom{z+q}{q+k}}{\binom{z+q}{q}}.\]
In addition, we have
\begin{align*}
\sum_{k=0}^n (-1)^k \binom{z+q}{q+k} &= (-1)^q\sum_{j=q}^{n+q}(-1)^j\binom{z+q}{j} \\
&= (-1)^q\left((-1)^{n+q}\binom{z+q-1}{n+q}-(-1)^{q-1}\binom{z+q-1}{q-1}\right).
\end{align*}
The result follows. The particular cases are obtained be setting $z=q=n+1$ and $z=q=2n$, respectively.
\end{proof}

\begin{corollary} 
If $n,q$ are integers with $n\ge 0,q\ge 1$, then
\begin{multline*}
\shoveright{\sum_{j=1}^{n+1}(-1)^{j-1}\binom{n+q+1}{j}(H_{n+1}-H_j)= H_{n+1}\left(1+(-1)^n\binom{n+q}{n+1}\right)\qquad\qquad\qquad\qquad}\\%
-q(n+1)\binom{n+q+1}{q}\sum_{k=0}^n\frac{(-1)^k\binom{n}{k}}{(k+q)(n+1-k)^2}.
\end{multline*}
\end{corollary}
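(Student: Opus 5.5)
The plan is to differentiate identity \eqref{id_Cu} with respect to $z$ and then set $z=n+1$, which is an admissible value. The factor $(n+1-k)^2$ in the denominator on the right of the statement is what differentiating $1/(z-k)$ produces, and the harmonic numbers $H_{n+1}-H_j$ come from differentiating $\binom{z}{k}$.

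Write
\[F(z)=\sum_{k=0}^n(-1)^k\frac{\binom{z}{k}}{\binom{q+k}{k}},\qquad S(z)=\sum_{k=0}^n\binom{n}{k}\frac{(-1)^k}{(z-k)(q+k)},\]
so that \eqref{id_Cu} reads $F(z)=(-1)^nq(n+1)\binom{z}{n+1}S(z)$. Both sides are rational functions of $z$ that agree off the finite set $\{0,\dots,n\}$. They can therefore be differentiated termwise, and the derivatives agree at $z=n+1$. For the derivatives I use $\frac{d}{dz}\binom{z}{k}=\binom{z}{k}(H_z-H_{z-k})$, as in the proof of \eqref{Har_from_Cx}, together with $\frac{d}{dz}\frac{1}{z-k}=-\frac{1}{(z-k)^2}$.

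At $z=n+1$ the left side becomes
\[\sum_{k=0}^n(-1)^k\frac{\binom{n+1}{k}}{\binom{q+k}{k}}(H_{n+1}-H_{n+1-k}).\]
On the right, the product rule gives
\[(-1)^nq(n+1)\binom{n+1}{n+1}\Bigl[(H_{n+1}-H_0)S(n+1)-\sum_{k=0}^n\binom{n}{k}\frac{(-1)^k}{(n+1-k)^2(q+k)}\Bigr].\]
By \eqref{id_Cu} at $z=n+1$, the first piece equals $H_{n+1}F(n+1)$.

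Next I evaluate $F(n+1)$ using \eqref{id_Cf} at $z=n+1$. This gives
\[F(n+1)=\frac{\binom{n+q}{q-1}+(-1)^n}{\binom{n+q+1}{q}}.\]
Multiplying by $(-1)^n\binom{n+q+1}{q}$ turns $H_{n+1}F(n+1)$ into $H_{n+1}\bigl(1+(-1)^n\binom{n+q}{n+1}\bigr)$, which is exactly the first term on the right of the claim.

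For the left side I use the relation $\binom{z}{k}/\binom{q+k}{k}=\binom{z+q}{q+k}/\binom{z+q}{q}$ from the proof of \eqref{id_Cf}. At $z=n+1$ this gives $\binom{n+q+1}{q+k}=\binom{n+q+1}{n+1-k}$. I then reindex with $j=n+1-k$, so that $j$ runs from $1$ to $n+1$ and $(-1)^n(-1)^k=(-1)^{j-1}$. After multiplying by $(-1)^n\binom{n+q+1}{q}$, the left side becomes
\[\sum_{j=1}^{n+1}(-1)^{j-1}\binom{n+q+1}{j}(H_{n+1}-H_j).\]
Applying the same multiplier to the remaining sum on the right gives $-q(n+1)\binom{n+q+1}{q}\sum_{k=0}^n\frac{(-1)^k\binom{n}{k}}{(k+q)(n+1-k)^2}$, which completes the identity.

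The only delicate step is this bookkeeping of signs and binomial normalisations in the reindexing. Justifying the differentiation is harmless, because $z=n+1$ lies outside the excluded set.
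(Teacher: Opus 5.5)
Your proposal is correct and follows the paper's proof. You differentiate \eqref{id_Cu} in $z$, use \eqref{id_Cu} itself to turn the term from $\frac{d}{dz}\binom{z}{n+1}$ into $H_{n+1}F(n+1)$, evaluate $F(n+1)$ via \eqref{id_Cf}, and reindex with $j=n+1-k$ using $\binom{z}{k}/\binom{q+k}{k}=\binom{z+q}{q+k}/\binom{z+q}{q}$. The only difference is cosmetic: the paper writes the differentiated identity for general $z$ before setting $z=n+1$, whereas you specialize at once; your sign and normalization bookkeeping checks out.
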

\begin{proof}
We differentiate \eqref{id_Cu} with respect to $z$ and use \eqref{id_Cu} itself to obtain 
\begin{align*}
&\sum_{k=0}^n \frac{(-1)^k\binom{z}{k}}{\binom{q+k}{k}}(H_z-H_{z-k}) \\
&\qquad = (H_z-H_{z-n-1})\sum_{k=0}^n\frac{(-1)^k\binom{z}{k}}{\binom{q+k}{k}}+(-1)^{n+1} q(n+1)\binom{z}{n+1}\sum_{k=0}^n\frac{(-1)^k\binom{n}{k}}{(k+q)(z-k)^2}.
\end{align*}
Taking $z=n+1$ and using \eqref{id_Cf} lead to 
\begin{align*}
 \sum_{k=0}^n(-1)^k\binom{n+q+1}{q+k}(H_{n+1}-H_{n+1-k}) &= H_{n+1}\left((-1)^n+\binom{n+q}{n+1}\right) \\
 & +(-1)^{n+1} q(n+1)\binom{n+q+1}{q}\sum_{k=0}^n\frac{(-1)^k\binom{n}{k}}{(k+q)(n+1-k)^2}
 \end{align*}
 and a change of index of summation in the first sum gives the result.
\end{proof}

\begin{remark} After some easy calculations, the particular case $q=1$ writes as 
\begin{equation}\label{eq_12}  
\sum_{j=1}^{n+2}(-1)^{j-1}\binom{n+2}{j}H_j=\frac{1}{n+2},
\end{equation}
a known result.
\end{remark}
The last identity of the section uses \eqref{id_Cv}.
\begin{corollary}
For each $n\geq 0$ we have
\begin{equation*}
\sum_{k=0}^n \binom{n}{k} \frac{(-1)^{k+1}}{(2k-1)(k+1)^2} 
= \frac{2^{2n+2}}{9\binom{2n}{n}} + \frac{1}{3(n+1)}\left ( H_{n+1} + \frac{2}{3}\right ).
\end{equation*}
\end{corollary}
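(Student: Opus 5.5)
The plan is to reduce the sum to three alternating binomial sums that have already been evaluated, using a partial fraction decomposition in $k$. (Identity \eqref{id_Cv} with $q=1$, $z=1/2$ would produce the same sum, but its left-hand side involves a harmonic-weighted sum with $\binom{1/2}{k}$ that is harder to handle, so I would bypass it.)

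First I decompose the rational function of $k$:
\[\frac{1}{(2k-1)(k+1)^2}=\frac{A}{2k-1}+\frac{B}{k+1}+\frac{C}{(k+1)^2}.\]
\begin{itemize}
\item Multiplying by $2k-1$ and setting $k=1/2$ gives $A=\frac{1}{(3/2)^2}=\frac49$.
\item Multiplying by $(k+1)^2$ and setting $k=-1$ gives $C=-\frac13$.
\item Comparing the $1/k$ behaviour as $k\to\infty$ gives $A/2+B=0$, hence $B=-\frac29$.
\end{itemize}
I would verify this decomposition once directly, for instance at $k=0$: $-\frac49-\frac29-\frac13=-1$, as required.

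Next I multiply by $(-1)^{k+1}\binom{n}{k}$ and sum over $k$, which yields
\[-\frac49\sum_{k=0}^n\binom{n}{k}\frac{(-1)^k}{2k-1}+\frac29\sum_{k=0}^n\binom{n}{k}\frac{(-1)^k}{k+1}+\frac13\sum_{k=0}^n\binom{n}{k}\frac{(-1)^k}{(k+1)^2}.\]
Each of these sums is known from earlier in the paper.
\begin{itemize}
\item The first equals $-\frac{2^{2n}}{\binom{2n}{n}}$; this is the special case of Corollary \ref{polynomial1} noted after it.
\item The second equals $\frac{1}{n+1}$, from $\sum_{k}\binom{n}{k}\frac{x^{k+1}}{k+1}=\frac{(x+1)^{n+1}-1}{n+1}$ at $x=-1$.
\item The third equals $\frac{H_{n+1}}{n+1}$, by the first identity in \eqref{eq 5}.
\end{itemize}

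Substituting these values gives
\[\frac{4\cdot 2^{2n}}{9\binom{2n}{n}}+\frac{2}{9(n+1)}+\frac{H_{n+1}}{3(n+1)},\]
which is exactly the claimed right-hand side, since $\frac{2}{9(n+1)}=\frac{1}{3(n+1)}\cdot\frac23$.

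There is no serious obstacle. The only step needing care is getting the partial fraction coefficients and the signs right, in particular the factor $(-1)^{k+1}$ against the sign convention of the known sum $\sum_{k}\binom{n}{k}\frac{(-1)^k}{2k-1}$. I would therefore check the final identity numerically for $n=0$, where both sides equal $1$, and for $n=1$, where both sides equal $\frac{11}{12}$.
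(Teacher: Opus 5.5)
Your proof is correct, and it takes a genuinely different route from the paper.

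\textbf{The paper's route.} The paper does exactly what you chose to bypass. It sets $q=1$, $z=1/2$ in \eqref{id_Cv} to obtain
\[\sum_{k=0}^n \frac{2^{-2k}}{2k-1}C_kH_{k+1}=(n+1)C_n2^{-2n}\sum_{k=0}^n\binom{n}{k}\frac{(-1)^k}{(2k-1)(k+1)^2}.\]
It then imports the closed form of the Catalan--harmonic sum on the left from the authors' earlier work \cite[Corollary 4.2]{Bataille2}.

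\textbf{Your route.} Your partial-fraction decomposition has coefficients $\frac49,-\frac29,-\frac13$, all of which are right. It reduces everything to three evaluations already established in the paper, so your proof is self-contained and more elementary. It is the same device the paper uses for \eqref{id_Cc}, where it splits $\frac{1}{(k+1)(2k-1)}$.

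\textbf{What each buys.} The paper's route shows the corollary is equivalent, via \eqref{id_Cv}, to the Catalan--harmonic evaluation. Read together with the display above, your argument runs the implication the other way. It gives an independent proof of that external result instead of depending on it.

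\textbf{A slip in your sanity check.} For $n=1$ the left side is $1+\frac14=\frac54$. The right side is $\frac89+\frac{13}{36}=\frac54$, not $\frac{11}{12}$. Your $n=0$ check, with both sides equal to $1$, is fine.
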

\begin{proof}
Set $q=1$ and $z=1/2$ in \eqref{id_Cv}. This yields
$$\sum_{k=0}^n \frac{2^{-2k}}{2k-1} C_k H_{k+1} = (n+1)C_n 2^{-2n}\sum_{k=0}^n \binom{n}{k} \frac{(-1)^{k}}{(2k-1)(k+1)^2},$$
where $C_n=1/(n+1)\binom{2n}{n}$ denotes the $n$the Catalan number. But Bataille and Frontczak \cite[Corollary 4.2]{Bataille2} 
have shown that
$$\sum_{k=1}^n \frac{2^{-2k}}{2k-1} C_k H_{k+1} = \frac{5}{9} - \frac{2^{-2n}}{3} C_n \left (H_{n+1} + \frac{2}{3} \right ).$$
This instantly shows that
$$\sum_{k=0}^n \frac{2^{-2k}}{2k-1} C_k H_{k+1} = - \frac{4}{9} - \frac{2^{-2n}}{3} C_n \left (H_{n+1} + \frac{2}{3} \right )$$
and the statement follows.
\end{proof}

\section{A particular sum}

This section is devoted to the study of the sums
$$S_n(q) = \sum_{k=0}^n \binom{n}{k} \frac{(-1)^{k}}{(k+q)(n-k+q)}, \qquad q\geq 1,$$
and to some related problems. We start with a closed form for the sum, providing two proofs.

\begin{proposition}
Let $q$ be a positive integer. Then we have
\begin{equation}\label{id_C6}
S_n(q) = \frac{(-1)^n + 1}{q (n+2q)\binom{n+q}{n}}.
\end{equation}
\end{proposition}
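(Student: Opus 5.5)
The plan is to obtain the closed form from \eqref{id_Ce} by setting $z=n+q$. Then $z-k=n-k+q$, so the left-hand side of \eqref{id_Ce} is exactly $S_n(q)$.

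First we check that $z=n+q$ is admissible. It is a positive integer greater than $n$ (since $q\ge 1$), so it avoids $\{-q,\ldots,0,1,\ldots,n\}$. The right-hand side of \eqref{id_Ce} then becomes
\[
\frac{\binom{n+2q-1}{n+q}+(-1)^n\binom{n+2q-1}{q-1}}{\binom{n+2q}{q}\, q(n+1)\binom{n+q}{n+1}}.
\]

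Next we simplify the numerator. By symmetry, $\binom{n+2q-1}{n+q}=\binom{n+2q-1}{q-1}$, so the numerator equals $(1+(-1)^n)\binom{n+2q-1}{q-1}$. This already produces the parity factor appearing in \eqref{id_C6}.

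It remains to verify that
\[
\frac{\binom{n+2q-1}{q-1}}{\binom{n+2q}{q}(n+1)\binom{n+q}{n+1}}=\frac{1}{(n+2q)\binom{n+q}{n}}.
\]
We use three factorial rewrites:
\begin{itemize}
\item $\binom{n+2q-1}{q-1}=\frac{q}{n+2q}\binom{n+2q}{q}$,
\item $(n+1)\binom{n+q}{n+1}=q\binom{n+q}{q}$,
\item $\binom{n+q}{q}=\binom{n+q}{n}$.
\end{itemize}
Substituting these, the left side is $\frac{q}{(n+2q)\,q\binom{n+q}{n}}=\frac{1}{(n+2q)\binom{n+q}{n}}$. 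Multiplying by the prefactor $\frac{1}{q}$ carried over from \eqref{id_Ce} then gives exactly \eqref{id_C6}.

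There is no real obstacle here; the only care needed is the binomial bookkeeping in this last step. As an independent second proof, one could decompose $\frac{1}{(k+q)(n-k+q)}=\frac{1}{n+2q}\left(\frac{1}{k+q}+\frac{1}{n-k+q}\right)$. Reflecting $k\mapsto n-k$ in the second piece gives $S_n(q)=\frac{1+(-1)^n}{n+2q}\sum_k\binom{n}{k}\frac{(-1)^k}{k+q}$. The remaining sum equals $\frac{1}{q\binom{n+q}{n}}$ by \eqref{partial} with $x=q$.
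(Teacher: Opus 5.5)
Your proposal is correct and matches the paper's proof. The paper's first proof is exactly the specialization $z=n+q$ in \eqref{id_Ce}, where your binomial bookkeeping is accurate. Its second proof is the same partial-fraction decomposition, using \eqref{eq_9} in place of your \eqref{partial} with $x=q$; the two are equivalent.
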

\begin{proof} First Proof: use  \eqref{id_Ce} with $z=n+q$.\\
Second Proof: As we have the decomposition
$$\frac{1}{(k+q)(n-k+q)} = \frac{1}{n+2q}\left ( \frac{1}{k+q} + \frac{1}{n-k+q}\right ),$$
we make use of \eqref{eq_9} and get
\begin{align*}
S_n(q) &= \frac{1}{n+2q}\left (\sum_{k=0}^n \binom{n}{k} \frac{(-1)^{k}}{k+q} + \sum_{k=0}^n \binom{n}{k} \frac{(-1)^{k}}{n-k+q} \right ) \\
&= \frac{1}{n+2q}\left (\sum_{k=0}^n \binom{n}{k} \frac{(-1)^{k}}{k+q} + (-1)^n \sum_{k=0}^n \binom{n}{k} \frac{(-1)^{k}}{k+q} \right ) \\
&= \frac{1}{n+2q}\left ( \frac{1}{q\binom{n+q}{n}} + (-1)^n \frac{1}{q\binom{n+q}{n}}\right ).
\end{align*} 
\end{proof}

\begin{corollary}
\begin{equation*}
S_n(1) = 
\begin{cases}
0, & \text{\rm if $n$ is odd;} \\ 
\frac{2}{(n+1)(n+2)}, & \text{\rm if $n$ is even;} 
\end{cases}
\end{equation*}
and
\begin{equation*}
S_n(2) = 
\begin{cases}
0, & \text{\rm if $n$ is odd;} \\ 
\frac{2}{(n+1)(n+2)(n+4)}, & \text{\rm if $n$ is even.} 
\end{cases}
\end{equation*}
\end{corollary}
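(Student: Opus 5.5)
This is a direct specialization of \eqref{id_C6}, so I will substitute $q=1$ and $q=2$, and then split according to the parity of $n$.

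\textbf{The odd case.} When $n$ is odd, $(-1)^n+1=0$. The denominator $q(n+2q)\binom{n+q}{n}$ is nonzero for $q\ge 1$, so $S_n(q)=0$ for every $q$, and in particular for $q=1,2$.

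\textbf{The even case.} When $n$ is even, $(-1)^n+1=2$, and it remains to simplify the denominator.

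For $q=1$, we have $\binom{n+1}{n}=n+1$. This gives
\[
S_n(1)=\frac{2}{(n+2)(n+1)}.
\]

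For $q=2$, we have $\binom{n+2}{n}=\frac{(n+1)(n+2)}{2}$. The denominator is therefore $2(n+4)\cdot\frac{(n+1)(n+2)}{2}=(n+1)(n+2)(n+4)$, which gives
\[
S_n(2)=\frac{2}{(n+1)(n+2)(n+4)}.
\]

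There is no real obstacle. The only points needing care are the arithmetic of the binomial coefficients and the check that the denominator never vanishes.

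As a sanity check, the case $q=1$ agrees with \eqref{Sn1} after the substitution $k\mapsto n-k$ in the factor $(n-k+1)$. This is consistent, since $S_n(1)$ is exactly the sum in \eqref{Sn1}.
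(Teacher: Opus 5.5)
Your proof is correct and follows the paper's route: the paper gives this corollary without a separate proof, as the direct specialization $q=1,2$ of \eqref{id_C6}, and your parity split and the simplifications $\binom{n+1}{n}=n+1$ and $2\binom{n+2}{n}=(n+1)(n+2)$ are right. One small point: your sanity check needs no substitution $k\mapsto n-k$, because $S_n(1)$ agrees term by term with the sum in \eqref{Sn1}.
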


A natural generalization of $S_n(q)$ is the sum
$$S_n(q;x) = \sum_{k=0}^n \binom{n}{k} \frac{(-1)^{k} x^k}{(k+q)(n+q-k)}.$$ 
The initial formula \eqref{main_lem_id} with $z=n+q$ and $-x$ instead of $x$ gives
\[\sum_{k=0}^n \binom{n+q}{k} (-1)^k (1-x)^k = K_{n,q}\sum_{k=0}^n \binom{n}{k} \frac{(-1)^k x^k}{n+q-k}
= K_{n,q}\sum_{k=0}^n \binom{n}{k}\frac{(-1)^{n-k} x^{n-k}}{k+q},\]
where $K_{n,q}=(-1)^n(n+1)\binom{n+q}{n+1}$. In order to replace $x^{n-k}$ by $x^k$ in the rightmost sum, we change $x$ into $\frac{1}{x}$. This yields
\[K_{n,q}\sum_{k=0}^n\binom{n}{k}\frac{(-1)^{k} x^{k}}{k+q} = (-1)^n\sum_{k=0}^n \binom{n+q}{k}x^{n-k}(1-x)^k.\]
It follows that 
\begin{equation}\label{xq_id}
(n+2q)K_{n,q}\sum_{k=0}^n \binom{n}{k} \frac{(-1)^{k}x^k}{(k+q)(n+q-k)}=\sum_{k=0}^n\binom{n+q}{k}(-1)^k(1-x)^k(1+(-1)^{n-k}x^{n-k}),
\end{equation}
which, with $x=1$, gives \eqref{id_C6} again and also leads to the following result.

\begin{proposition}
Let $q$ be a positive integer and
\[U(n,q) = \sum_{k=0}^n \binom{n}{k} \frac{(-1)^{k}}{(k+1)(k+q)(n+q-k)} .\]
Then we have 
\begin{equation}\label{Un1_id}
U(n,1) = \sum_{k=0}^n \binom{n}{k} \frac{(-1)^{k}}{(k+1)^2(n+1-k)} = \frac{(n+2)H_{n+2}+(-1)^n}{(n+1)(n+2)^2},
\end{equation}
and if $q\ge 2$
\begin{equation}
U(n,q) = \frac{1}{(n+1)(q-1)(n+q+1)}+\frac{(-1)^n}{q(n+q+1)(n+2q)\binom{n+q}{n}}-\frac{1}{q(q-1)(n+2q)\binom{n+q}{n}}.
\end{equation}
\end{proposition}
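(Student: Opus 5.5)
The plan is to integrate identity \eqref{xq_id} from $0$ to $1$, since $\int_0^1 x^k\,dx=\frac{1}{k+1}$ turns the left-hand side into $(n+2q)K_{n,q}\,U(n,q)$. On the right-hand side we get
\[\sum_{k=0}^n\binom{n+q}{k}(-1)^k\left(\int_0^1(1-x)^k\,dx+(-1)^{n-k}\int_0^1x^{n-k}(1-x)^k\,dx\right),\]
and both integrals are Beta integrals. The first equals $\frac{1}{k+1}$. The second equals $\frac{k!(n-k)!}{(n+1)!}$, which is the case $q\to n-k+1$ of Lemma \ref{beta_int}.

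The first piece is $\sum_{k=0}^n\binom{n+q}{k}\frac{(-1)^k}{k+1}$. This is \eqref{id_Cx} with $z=n+q$, which gives $\frac{1}{n+q+1}\bigl(1+(-1)^n\binom{n+q}{n+1}\bigr)$. For the second piece, $(-1)^n\sum_k\binom{n+q}{k}\frac{k!(n-k)!}{(n+1)!}$, I rewrite the summand as
\[\binom{n+q}{k}\frac{k!(n-k)!}{(n+1)!}=\frac{1}{n+1}\binom{n+q}{n}\frac{1}{\binom{q+n-k}{n-k}}.\]
Reindexing with $j=n-k$, this becomes $\frac{(-1)^n}{n+1}\binom{n+q}{n}\sum_{j=0}^n\frac{1}{\binom{q+j}{j}}$.

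When $q\ge2$ the sum telescopes:
\[\frac{1}{\binom{q+j}{j}}=\frac{q}{q-1}\left(\frac{1}{\binom{q+j-1}{j}}-\frac{1}{\binom{q+j}{j+1}}\right),\]
which is equivalent to the telescoping used for \eqref{eq 3}. Hence $\sum_{j=0}^n\frac{1}{\binom{q+j}{j}}=\frac{q}{q-1}\bigl(1-\frac{1}{\binom{q+n}{n+1}}\bigr)$. When $q=1$ the sum is $\sum_{j=0}^n\frac{1}{j+1}=H_{n+1}$, which is why that case is stated separately and carries a harmonic number.

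Finally, I divide by $(n+2q)K_{n,q}=(-1)^n(n+2q)(n+1)\binom{n+q}{n+1}$ and simplify, using $(n+1)\binom{n+q}{n+1}=q\binom{n+q}{n}$ and $\binom{n+q}{n+1}=\frac{q}{n+1}\binom{n+q}{n}$. This is the main obstacle, purely bookkeeping. I expect the $\frac{1}{(n+1)(q-1)(n+q+1)}$ term to arise from combining $\frac{1}{n+q+1}\binom{n+q}{n+1}$ with $\frac{q}{(q-1)(n+1)}\binom{n+q}{n}$ over the common factor $n+2q$, since $\frac{1}{n+q+1}+\frac{1}{(q-1)(n+1)}\cdot$ appropriate factors should collapse. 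I will check the result against $n=0$.

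For $q=1$ the same route gives
\[(n+2)(n+1)(-1)^n\,U(n,1)=\frac{1}{n+2}\bigl(1+(-1)^n\bigr)+(-1)^nH_{n+1}.\]
Writing $H_{n+1}=H_{n+2}-\frac{1}{n+2}$ should produce \eqref{Un1_id}. As an alternative check, one could instead use partial fractions in $k$ together with \eqref{eq 5} and \eqref{id_C6}.
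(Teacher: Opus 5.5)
Your proposal is correct and follows the paper's proof. You integrate \eqref{xq_id} over $[0,1]$, evaluate $\sum_k\binom{n+q}{k}\frac{(-1)^k}{k+1}$ by \eqref{id_Cx}, and the final simplification collapses as you expect, via $\frac{1}{q-1}+\frac{1}{n+q+1}=\frac{n+2q}{(q-1)(n+q+1)}$. The one difference is the remaining sum $\frac{1}{n+1}\sum_k\binom{n+q}{k}/\binom{n}{k}$: you rewrite it as $\frac{1}{n+1}\binom{n+q}{n}\sum_{j=0}^n\binom{q+j}{j}^{-1}$ and telescope, whereas the paper integrates Identity (A) of \cite{Bataille2} and then applies \eqref{eq_8}; both give $\frac{1}{q-1}\bigl(\binom{n+q}{n+1}-1\bigr)$, and your version avoids the external identity.
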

\begin{proof}
Integrate \eqref{xq_id} from $0$ to $1$, giving
\begin{equation}\label{eq_11}
\sum_{k=0}^n \frac{(-1)^{k}\binom{n}{k}}{(k+1)(k+q)(n+q-k)} = \frac{(-1)^n}{q(n+2q) \binom{n+q}{n}}\left(\frac{(-1)^n}{n+1}\sum_{k=0}^n\frac{\binom{n+q}{k}}{\binom{n}{k}}+\sum_{k=0}^n (-1)^k \frac{\binom{n+q}{k}}{k+1}\right).
\end{equation}
If $q=1$, we obtain
\[U(n,1) = \frac{(-1)^n}{(n+2)(n+1)}\left((-1)^n H_{n+1}+\sum_{k=0}^n\binom{n+1}{k}\frac{(-1)^k}{k+1}\right).\]
Since
\[\sum_{k=0}^n\binom{n+1}{k}\frac{(-1)^k}{k+1} = \frac{1}{n+2}\sum_{k=0}^n\binom{n+2}{k+1}(-1)^k=\frac{1+(-1)^n}{n+2}\]
a short calculation gives the expected result. \\[0.2cm]
If $q\ge 2$, we first use the formula \cite[Identity (A)]{Bataille2}
\[\sum_{k=0}^n\binom{m+k}{k} x^k=\sum_{k=0}^n\binom{m+n+1}{k}x^k(1-x)^{n-k}.\]
Integrating from 0 to 1 yields
\[\sum_{k=0}^n\frac{\binom{m+n+1}{k}}{\binom{n}{k}}=(n+1)\sum_{k=0}^n\binom{m+k}{k}\frac{1}{k+1}.\]
With $m=q-1$ and the help of \eqref{eq_8}, we are led to 
\[\sum_{k=0}^n\frac{\binom{n+q}{k}}{\binom{n}{k}}=(n+1)\sum_{k=0}^n\binom{q-1+k}{k}\frac{1}{k+1}=\frac{n+1}{q-1}\left(\binom{n+q}{n+1}-1\right).\]
Second, from \eqref{id_Cx}, we deduce
\[\sum_{k=0}^n (-1)^k \frac{\binom{n+q}{k}}{k+1}=\frac{1}{n+q+1}\left(1+(-1)^{n}\binom{n+q}{n+1}\right).\]
Returning to \eqref{eq_11}, a simple calculation gives the result.
\end{proof}

As an example, here is the case $q=2$:
\begin{corollary}
We have
\begin{equation}\label{Un2_id}
\sum_{k=0}^n \binom{n}{k} \frac{(-1)^{k}}{(k+1)(k+2)(n+2-k)} = \frac{n^2+5n+5+(-1)^n}{(n+1)(n+2)(n+3)(n+4)}.
\end{equation}
\end{corollary}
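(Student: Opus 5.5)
The identity \eqref{Un2_id} is the case $q=2$ of the preceding proposition, so the plan is to substitute $q=2$ into the general formula for $U(n,q)$ (valid for $q\ge 2$) and simplify. No new summation is needed; the work is purely algebraic.

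With $q=2$ we have $q-1=1$, $n+q+1=n+3$, $n+2q=n+4$, and
$$\binom{n+2}{n}=\binom{n+2}{2}=\frac{(n+1)(n+2)}{2}.$$
Hence $q\binom{n+q}{n}=(n+1)(n+2)$. The three terms of the proposition become
$$\frac{1}{(n+1)(n+3)},\qquad \frac{(-1)^n}{(n+1)(n+2)(n+3)(n+4)},\qquad -\frac{1}{(n+1)(n+2)(n+4)}.$$

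Next we bring everything over the common denominator $D=(n+1)(n+2)(n+3)(n+4)$. The first term contributes the numerator $(n+2)(n+4)=n^2+6n+8$. The second contributes $(-1)^n$. The third contributes $-(n+3)$. Adding gives
$$n^2+6n+8-(n+3)+(-1)^n=n^2+5n+5+(-1)^n,$$
which is exactly the numerator claimed in \eqref{Un2_id}.

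There is no real obstacle here. The only point needing care is to check that $q=2$ lies in the range $q\ge 2$ of the general formula, so the case $q=1$ (formula \eqref{Un1_id}) does not intervene. As a sanity check, $n=0$ gives $U(0,2)=\frac{1}{1\cdot 2\cdot 2}=\frac14$ directly from the definition, and the right-hand side gives $\frac{5+1}{24}=\frac14$.
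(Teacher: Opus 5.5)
Your proposal is correct and follows the paper's route exactly: the paper presents this corollary simply as the case $q=2$ of the preceding proposition on $U(n,q)$. Your simplification of the three terms over $(n+1)(n+2)(n+3)(n+4)$ to $n^2+5n+5+(-1)^n$ is right.
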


We also have the following evaluations.

\begin{corollary}
For $n\geq 0$,
\begin{equation}
\sum_{k=0}^n \binom{n}{k} \frac{(-1)^{k}}{(k+1)^2(k+2)(n+1-k)} = \frac{(n+2)(n+3)H_{n+2} + (-1)^n - (n+2)^2}{(n+1)(n+2)^2(n+3)},
\end{equation}
\begin{equation}
\sum_{k=0}^n \binom{n}{k} \frac{(-1)^{k}}{(k+1)^2(k+2)(n+2-k)} 
= \frac{(n+2)(n+3)(n+4)H_{n+2} + (-1)^n - n^3-8n^2-21n-19 }{(n+1)(n+2)(n+3)^2(n+4)},
\end{equation}
and
\begin{equation}
\sum_{k=0}^n \binom{n}{k} \frac{(-1)^{k}}{(k+1)(k+2)^2(n+2-k)} 
= \frac{n^3+10n^2+31n+29 - (n+3)(n+4)H_{n+2} + (-1)^n}{(n+1)(n+2)(n+3)(n+4)^2}.
\end{equation}
\end{corollary}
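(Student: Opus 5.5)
Each of the three evaluations will be reduced, by partial fractions in $k$, to sums that are already known. The known sums are $U(n,1)$ in \eqref{Un1_id}, $U(n,2)$ in \eqref{Un2_id}, and the elementary sums
\[\sum_{k=0}^n\binom{n}{k}\frac{(-1)^k}{(k+1)(n+1-k)}=\frac{1+(-1)^n}{(n+1)(n+2)} \quad\text{(this is \eqref{Sn1})},\qquad
\sum_{k=0}^n\binom{n}{k}\frac{(-1)^k}{(k+2)(n+2-k)}=S_n(2),\]
together with the single-denominator sums $\sum\binom{n}{k}(-1)^k/(k+a)=1/(a\binom{n+a}{n})$ from \eqref{eq_9}, and $\sum\binom{n}{k}(-1)^k/(k+1)^2=H_{n+1}/(n+1)$ from \eqref{eq 5}.

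For the first sum I will write
\[\frac{1}{(k+1)^2(k+2)}=\frac{1}{(k+1)^2}-\frac{1}{(k+1)(k+2)}.\]
The first piece gives $U(n,1)$. The second piece gives $U(n,2)$ with $n+2-k$ replaced by $n+1-k$, so it does not match directly. Instead I will handle the second piece by a partial fraction that isolates $n+1-k$:
\[\frac{1}{(k+1)(k+2)(n+1-k)}=\frac{1}{(k+1)(k+2)}\cdot\frac{1}{n+1-k}.\]
Then I will split $\frac{1}{(k+2)(n+1-k)}=\frac{1}{n+3}\bigl(\frac1{k+2}+\frac1{n+1-k}\bigr)$. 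This leaves $\frac{1}{n+3}\sum\binom nk\frac{(-1)^k}{(k+1)(k+2)}$, which is elementary via \eqref{eq 4} with $q=1$, plus $\frac{1}{n+3}$ times the sum \eqref{Sn1}. Subtracting this from $U(n,1)$ and putting everything over $(n+1)(n+2)^2(n+3)$ should give the stated numerator.

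The second and third sums both have denominator factor $n+2-k$. Here I will use
\[\frac{1}{(k+1)^2(k+2)}=\frac{1}{(k+1)^2}-\frac{1}{k+1}+\frac{1}{k+2},\qquad \frac{1}{(k+1)(k+2)^2}=\frac{1}{k+1}-\frac{1}{k+2}-\frac{1}{(k+2)^2}.\]
Every resulting term then has the form $\binom nk(-1)^k/\bigl((k+a)^j(n+2-k)\bigr)$. Each one will be split again with $\frac{1}{(k+a)(n+2-k)}=\frac{1}{n+2+a}\bigl(\frac{1}{k+a}+\frac1{n+2-k}\bigr)$, applied once or twice. The pieces then come in three kinds:
\begin{itemize}
\item single-denominator sums $\sum\binom nk(-1)^k/(k+a)$ for $a=1,2$ and $\sum\binom nk(-1)^k/(n+2-k)$, which reduce by $k\to n-k$ to the $a=2$ case with sign $(-1)^n$;
\item the squared sum $\sum\binom nk(-1)^k/(k+1)^2=H_{n+1}/(n+1)$;
\item the squared sum $\sum\binom nk(-1)^k/(k+2)^2=\frac{H_{n+2}-1}{2\binom{n+2}{2}}$, which follows from \eqref{id_Cv} with $q=2$ in the $z\to\infty$ limit, or equivalently from Lemma \ref{beta_int} after integrating $x(1-x)^n\ln x$.
\end{itemize}
Alternatively, for the second sum one can use $\frac{1}{(k+1)^2(k+2)(n+2-k)}$ together with $U(n,2)$ and the known $(k+1)^2$ sum.

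The conceptual steps are routine. The main obstacle is the bookkeeping: the different harmonic numbers $H_{n+1}$ and $H_{n+2}$ must be combined via $H_{n+2}=H_{n+1}+\frac1{n+2}$, and the rational parts must be collected over $(n+1)(n+2)(n+3)^2(n+4)$ and $(n+1)(n+2)(n+3)(n+4)^2$ so as to recover the cubic numerators. I would check each final formula at $n=0$ and $n=1$ to catch sign errors.
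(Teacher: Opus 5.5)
Your proposal is correct. The bookkeeping you defer does close: the first sum becomes $U(n,1)-\frac{n+2+(-1)^n}{(n+1)(n+2)(n+3)}$. In the second and third sums the $(-1)^n$ coefficients collapse to $1$ via $(n+3)^2-(n+2)(n+4)=1$ and $(n+4)^2-(n+3)(n+5)=1$, and the cubic numerators come out as stated.

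The paper takes a shorter route. It pairs the reflected factor with a single linear factor in $k$, chosen so that one piece is exactly $U(n,1)$ or $U(n,2)$, e.g.
\[\frac{1}{(k+1)^2(k+2)(n+1-k)}=\frac{1}{n+3}\Bigl(\frac{1}{(k+1)^2(k+2)}+\frac{1}{(k+1)^2(n+1-k)}\Bigr).\]
In the second (resp. third) sum it splits $\frac{1}{(k+1)(n+2-k)}$ (resp. $\frac{1}{(k+2)(n+2-k)}$) in the same way, which leaves $U(n,2)$ plus a sum with no reflected factor. Those remaining sums, $\sum_k\binom nk\frac{(-1)^k}{(k+1)^2(k+2)}=\frac{H_{n+2}-1}{n+1}$ and $\sum_k\binom nk\frac{(-1)^k}{(k+1)(k+2)^2}=\frac{n+2-H_{n+2}}{(n+1)(n+2)}$, are simply quoted from Adegoke et al. Each formula is then a one-line combination.

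Your route decomposes the $k$-part into simple fractions first and pushes everything down to single-denominator and squared sums. It costs noticeably more algebra, and your main line does not use $U(n,2)$. In exchange, it is self-contained within the paper: it needs only \eqref{eq_9}, \eqref{eq 5}, \eqref{Sn1}, \eqref{Un1_id} and Lemma~\ref{beta_int}. Your simple-fraction decompositions in fact reprove the two quoted identities, and you bypass the more involved proposition giving $U(n,q)$ for $q\ge 2$.
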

\begin{proof}
Combine equations \eqref{Un1_id} and \eqref{Un2_id} with the following identities which can be found in 
\cite[Eqs. (4.20) and (4.(21)]{Adegoke25}
$$\sum_{k=0}^n \binom{n}{k} \frac{(-1)^{k}}{(k+1)^2(k+2)} =\frac{H_{n+2}-1}{n+1}$$
and
$$\sum_{k=0}^n \binom{n}{k} \frac{(-1)^{k}}{(k+1)(k+2)^2} =\frac{n+2-H_{n+2}}{(n+1)(n+2)}.$$
\end{proof}

\section{Concluding Comments}

This paper is based on a fundamental lemma involving two complex parameters $z$ and $x$. This result turned out to be extremely productive,
allowing us to present a range of new (polynomial) combinatorial identities. Some of these generalize existing relations, others produce new results or rediscover known material. It is obvious, however, that the findings derived in this paper can be exploited further. For example, a considerable amount of new relations for second-order recurrent sequences can be obtained. To keep the paper readable, we just indicate a few Fibonacci relations that are consequences of specific results from the previous sections. \\

We recall the basic facts: The Fibonacci numbers are defined by $F_0=0,F_1=1$ and the recursion $F_{n+2}=F_{n+1}+F_n$ for $n\in\mathbb{Z}$. The Lucas numbers $L_n$ satisfy the same recursion but begin with $L_0=2$ and $L_1=1$. We know that 
$$F_n=\frac{\alpha^n-\beta^n}{\sqrt{5}} \qquad\text{and}\qquad L_n=\alpha^n+\beta^n,$$
where $\alpha=\frac{1+\sqrt{5}}{2}$ and $\beta=\frac{1-\sqrt{5}}{2}$ are the roots of the equation $x^2-x-1=0$.\\
A number of useful relations are satisfied by the powers of $\alpha$ (and $\beta$), e.g. $1+\alpha=\alpha^2,1+\alpha^4=3\alpha^2,1+4\alpha^3=\alpha^6...$. More generally, using the Lucas numbers $L_r$ we have 
\[1+(-1)^{r-1}\alpha^r L_r=(-1)^{r-1}\alpha^{2r}\quad (\mbox{or}\  1+(-1)^r\alpha^{2r}=(-1)^r\alpha^r L_r )\]
and similar relations with $\beta$ replacing $\alpha$. \\

In the next propositions, we use \eqref{id_Ca} to obtain interesting identities. In \eqref{id_Ca}, we first take $x=(-1)^{r-1}\alpha^r L_r$, then $x=(-1)^{r-1}\beta^r L_r$. Combining appropriately, we obtain that for integers $r,n$ with $n\ge 0$:  
\begin{proposition}
\begin{align*}
(-1)^{n(r-1)}\sum_{k=0}^n\frac{\binom{n}{k}}{\binom{2k}{k}}2^{2k}(-1)^{k+1}L_r^k F_{2rn-k}&=\sum_{k=0}^n\binom{n}{k}(-1)^{k(r-1)}\frac{L_r^k F_{rk}}{2k-1}\\
&=\frac{2^{2n}}{\binom{2n}{n}}\sum_{k=0}^n\binom{2k}{k}(-1)^{k(r-1)}\frac{2^{-2k} F_{2rk}}{2k-1}
\end{align*}
\end{proposition}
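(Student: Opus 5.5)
The plan is to take the two particular values of $x$ announced just before the statement, substitute each into \eqref{id_Ca}, subtract the two resulting identities, and divide by $\sqrt5$. Binet's formula $F_m=(\alpha^m-\beta^m)/\sqrt5$ then turns every power of $\alpha$ minus the same power of $\beta$ into a Fibonacci number. Since \eqref{id_Ca} is a polynomial identity in $x$ with rational coefficients, it holds for every complex $x$. So it can be applied to $x=(-1)^{r-1}\alpha^rL_r$ and to $x=(-1)^{r-1}\beta^rL_r$, and the three members can be combined linearly term by term.

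The key algebraic input is the relation recalled above,
\[1+(-1)^{r-1}\alpha^rL_r=(-1)^{r-1}\alpha^{2r},\]
together with its analogue for $\beta$. With $x=(-1)^{r-1}\alpha^rL_r$ I will compute the three kinds of terms separately.
\begin{itemize}
\item Middle member: $x^k=(-1)^{k(r-1)}L_r^k\alpha^{rk}$.
\item Right member: $(1+x)^k=(-1)^{k(r-1)}\alpha^{2rk}$.
\item Left member: $x^k(1+x)^{n-k}=(-1)^{(r-1)k}(-1)^{(r-1)(n-k)}L_r^k\alpha^{rk}\alpha^{2r(n-k)}$, which simplifies to $(-1)^{n(r-1)}L_r^k\alpha^{2rn-rk}$.
\end{itemize}
The important point in the left member is that the sign no longer depends on $k$. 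This is why the factor $(-1)^{n(r-1)}$ can be pulled out in front of the whole sum.

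Doing the same with $\beta$, subtracting, and dividing by $\sqrt5$ gives the following. The middle member becomes $\sum\binom{n}{k}(-1)^{k(r-1)}L_r^kF_{rk}/(2k-1)$. The right member becomes $\frac{2^{2n}}{\binom{2n}{n}}\sum\binom{2k}{k}(-1)^{k(r-1)}2^{-2k}F_{2rk}/(2k-1)$. The left member becomes $(-1)^{n(r-1)}\sum\frac{\binom{n}{k}}{\binom{2k}{k}}2^{2k}(-1)^{k+1}L_r^kF_{r(2n-k)}$. The $k=0$ terms cause no trouble, since $F_0=0$ and the relevant factors are simply constants.

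The main thing to watch is bookkeeping in the left member. My computation produces the index $r(2n-k)=2rn-rk$. This coincides with the printed $2rn-k$ when $r=1$, but in general I expect the correct index to be $2rn-rk$. I would therefore verify the exponent carefully, for example by checking $n=1$, $r=2$ numerically, and state the left-hand side with $F_{2rn-rk}$ if that is what the computation gives. Apart from this, the only delicate point is tracking the signs $(-1)^{(r-1)k}$ consistently for $\alpha$ and $\beta$. Since both $\alpha$ and $\beta$ satisfy the same relation, the signs are identical in the two substitutions and factor out of the difference.
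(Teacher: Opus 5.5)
Your proposal is correct and is exactly the paper's argument: substitute $x=(-1)^{r-1}\alpha^rL_r$ and $x=(-1)^{r-1}\beta^rL_r$ into \eqref{id_Ca}, use $1+x=(-1)^{r-1}\alpha^{2r}$ (resp.\ $(-1)^{r-1}\beta^{2r}$), subtract, and divide by $\sqrt5$. Your suspicion about the index is justified: the computation yields $F_{r(2n-k)}$, and the printed $F_{2rn-k}$ is a misprint unless $r=1$ (for $n=1$, $r=2$ the printed left-hand side equals $-9$ while the other two members equal $-3$, and $F_{2rn-rk}$ gives the correct $-3$), which matches the companion proposition where the index is correctly written as $F_{r(n+k)}$.
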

Similarly, with $x=(-1)^r\alpha^{2r}$ (and $x=(-1)^r\beta^{2r}$) we get
\begin{proposition}
\begin{align*}
(-1)^{rn}\sum_{k=0}^n\frac{\binom{n}{k}}{\binom{2k}{k}}2^{2k}(-1)^{k+1}L_r^{n-k} F_{r(n+k)}&=\sum_{k=0}^n\binom{n}{k}(-1)^{rk}\frac{F_{2rk}}{2k-1}\\
&=\frac{2^{2n}}{\binom{2n}{n}}\sum_{k=0}^n\binom{2k}{k}(-1)^{rk)}\frac{2^{-2k} F_{rk}L_r^k}{2k-1}
\end{align*}
\end{proposition}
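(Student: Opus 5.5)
The plan is to specialize the polynomial identity \eqref{id_Ca} at $x=(-1)^r\alpha^{2r}$ and at $x=(-1)^r\beta^{2r}$, subtract the two results, and divide by $\sqrt5$. Since \eqref{id_Ca} is a polynomial identity in $x$, valid for every complex $x$, no values need to be excluded.

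First I check the key relation $1+(-1)^r\alpha^{2r}=(-1)^r\alpha^rL_r$ and its analogue for $\beta$. Using $\alpha\beta=-1$,
\[
\alpha^rL_r=\alpha^{2r}+(\alpha\beta)^r=\alpha^{2r}+(-1)^r .
\]
Multiplying by $(-1)^r$ gives $(-1)^r\alpha^rL_r=1+(-1)^r\alpha^{2r}$. The same computation with $\alpha$ and $\beta$ swapped gives
\[
1+(-1)^r\beta^{2r}=(-1)^r\beta^rL_r .
\]
This is the relation already quoted in the text, and it is the only place where some care is needed.

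Next, with $x=(-1)^r\alpha^{2r}$ and $1+x=(-1)^r\alpha^rL_r$, I rewrite the three members of \eqref{id_Ca} term by term.
\begin{itemize}
\item In the left sum, $x^k(1+x)^{n-k}=(-1)^{rk}\alpha^{2rk}\,(-1)^{r(n-k)}\alpha^{r(n-k)}L_r^{n-k}=(-1)^{rn}L_r^{n-k}\alpha^{r(n+k)}$.
\item In the middle sum, $x^k=(-1)^{rk}\alpha^{2rk}$.
\item In the right sum, $(1+x)^k=(-1)^{rk}L_r^k\alpha^{rk}$.
\end{itemize}
The same formulas hold with $\beta$ in place of $\alpha$. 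All other factors ($L_r$, the signs, and the binomial and power-of-two coefficients) are rational and identical in both specializations.

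Subtracting the $\beta$-version from the $\alpha$-version of each member and dividing by $\sqrt5$ turns every $\alpha^m-\beta^m$ into $\sqrt5\,F_m$. This yields
\[
(-1)^{rn}\sum_{k=0}^n\frac{\binom{n}{k}}{\binom{2k}{k}}2^{2k}(-1)^{k+1}L_r^{n-k}F_{r(n+k)}
=\sum_{k=0}^n\binom{n}{k}(-1)^{rk}\frac{F_{2rk}}{2k-1}
=\frac{2^{2n}}{\binom{2n}{n}}\sum_{k=0}^n\binom{2k}{k}(-1)^{rk}\frac{2^{-2k}F_{rk}L_r^k}{2k-1},
\]
which is the statement. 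The stray parenthesis in the exponent $(-1)^{rk)}$ in the statement should read $(-1)^{rk}$.

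Beyond the $\beta$-relation checked above, the remaining work is only bookkeeping of the exponents of $\alpha$ and of the signs.
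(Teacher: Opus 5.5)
Your proof is correct and follows the paper's own route: specialize \eqref{id_Ca} at $x=(-1)^r\alpha^{2r}$ and at $x=(-1)^r\beta^{2r}$, use $1+(-1)^r\alpha^{2r}=(-1)^r\alpha^rL_r$ (and its $\beta$-analogue), then subtract and divide by $\sqrt5$. Your check of that relation via $\alpha\beta=-1$, together with the term-by-term exponent bookkeeping, supplies the details the paper leaves implicit, and the argument holds for every integer $r$ because Binet's formulas remain valid for negative indices.
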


\begin{remark}
Of course, there exist other useful relations between the powers of $\alpha$ (and $\beta$). The reader  will easily check that the examples $1+\alpha^3=2\alpha^2$ and $ 1+2\alpha=\alpha^3$ provide
\[\sum_{k=0}^n\frac{\binom{n}{k}}{\binom{2k}{k}}(-1)^{k+1}2^{n+k}F_{2n+k}=\sum_{k=0}^n\binom{n}{k}\frac{F_{3k}}{2k-1}=\frac{2^{2n}}{\binom{2n}{n}}\sum_{k=0}^n\binom{2k}{k}\frac{F_{2k}}{2^k(2k-1)}\]
and
\[\sum_{k=0}^n\frac{\binom{n}{k}}{\binom{2k}{k}}(-1)^{k+1}2^{3k}F_{3n-2k}=\sum_{k=0}^n\binom{n}{k}\frac{2^k F_{k}}{2k-1}=\frac{2^{2n}}{\binom{2n}{n}}\sum_{k=0}^n\binom{2k}{k}\frac{F_{3k}}{2^{2k}(2k-1)}.\]
\end{remark}

To conclude this study, here is a surprising identity deduced from the relations obtained in Corollary 5.2. For integers $n,q\ge 1$, we have
\begin{equation*}\label{fib_ex}
\left(\sum_{k=0}^n\frac{\binom{n}{k}F_k}{k+q}\right)\left(\sum_{k=0}^n\binom{n+q}{k}(-1)^k F_k\right)+\left(\sum_{k=0}^n\frac{\binom{n}{k}(-1)^k F_{k}}{n+q-k}\right)\left(\sum_{k=0}^n\binom{n+q}{k}(-1)^k F_{n+k}\right)=0.
\end{equation*}
We leave the proof to the interested readers as a little exercise.

%\section{Conclusion}

%This paper is based on an polynomial identity involving two complex parameters. This identity turned out to be extremely rich
%and allowed us to explore a broad field of applications. Starting with a generalization of the hockey-stick identity we have given 
%a range of surprising and remarkable combinatorial identities.   

%\section{Acknowledgments}
%The authors would like to express their gratitude to the anonymous referees.

\end{document}